\documentclass[a4paper,11pt, reqno]{amsart}
\usepackage{amsthm, amsmath, amssymb, amsfonts, amsopn, color}
\usepackage{enumerate}

\textwidth=15cm \textheight=23.5cm
\addtolength{\topmargin}{-40pt} \addtolength{\oddsidemargin}{-1cm}
\addtolength{\evensidemargin}{-1cm}
\linespread{1.1}

\newtheorem{theorem}{Theorem}[section]

\newtheorem{proposition}[theorem]{Proposition}
\newtheorem{lemma}[theorem]{Lemma}
\numberwithin{equation}{section}

  \newcommand{\e}{\varepsilon}
\newcommand{\RN}{\mathbb{R}^2}
\newcommand{\LN}{L^2(\mathbb{R}^2)}

\begin{document}
\title[Existence of mixed type solutions]{Existence of  mixed type
solutions in the Chern-Simons gauge theory of rank two in $\mathbb{R}^2$}
\author{Kwangseok Choe}
\address[Kwangseok Choe]{Department of Mathematics, Inha University,
Incheon, 402-751, Korea}
\email{kschoe@inha.ac.kr}
\author{Namkwon Kim}
\address[Namkwon Kim]{Department of Mathematics, Chosun University, Kwangju 501-759, Republic of Korea.}
\email{kimnamkw@chosun.ac.kr}
\author{Youngae Lee}
\address[Youngae Lee]{National Institute for Mathematical Sciences,
KT Daeduk 2 Research Center, 70 Yuseong-daero 1689 beon-gil, Yuseong-gu,
Daejeon, 34047, Republic of Korea}
\email{youngaelee0531@gmail.com}
\author{Chang-Shou Lin}
\address[Chang-Shou Lin]{Taida Institute for Mathematical Sciences, Center
for Advanced Study in Theoretical Sciences, National Taiwan University,
No.1, Sec. 4, Roosevelt Road, Taipei 106, Taiwan}
\email{cslin@math.ntu.edu.tw }

\begin{abstract}
We consider the Chern-Simons gauge theory of rank $2$
such as $SU(3)$, $SO(5)$, and $G_2$ Chern-Simons model in $\RN$.
There may exist  three types of solutions in these theories, that is, topological, nontopological,
and mixed type solutions.
Among others, mixed type solutions can only exist in non-Abelian Chern-Simons models.
We show the existence of  mixed type solutions with an arbitrary configuration of vortex points which
 has been a long-standing open problem.
To show it, as the first step, we need to find when a priori bound would fail.
For the purpose, we shall find partially blowing up mixed type solutions by using different scalings for different components.  Due to the different scalings, we should control the mass contribution from infinity which is one of the important parts in this paper.
\end{abstract}

\date{\today }
\keywords{non-Abelian Chern-Simons models; mixed type solutions;
partially blowing up solutions}
\maketitle

%%%%%%%%%%%%%%%%%%%%%%%%%%%%%%%%%%%%%%%%%%%%%%%%%%

\section{Introduction}

In this article, we are interested in the non-Abelian relativistic self-dual Chern-Simons models
proposed by Kao-Lee \cite{KL} and Dunne \cite{D1,D2,D3}.
These models are defined in the $2+1$ Minkowski space $\mathbb{R}^{1,2}$ with metric tensor
$g_{\mu\nu}=\textrm{diag}(-1,1,1)$.  The corresponding gauge groups are compact Lie groups with semi-simple Lie algebras
$\mathcal{G}$ and Lie bracket $[\cdot,\cdot]$ over $\mathcal{G}$.
In the adjoint representation, the Lagrangian density is given by
\[ \mathcal{L}=-{\rm tr} \Big( (D_\mu\phi)^\dagger D^\mu\phi\Big) -\kappa\epsilon^{\mu\nu\rho}
 {\rm tr} \Big( \partial_\mu A_\nu A_\rho +\frac{2}{3}A_\mu A_\nu A_\rho\Big) -V(\phi,\phi^\dagger), \]
where the gauge-invariant scalar field potential $V(\phi,\phi^\dagger)$ is defined by
\[ V(\phi,\phi^\dagger)=\frac{1}{4\kappa^2} {\rm tr} \Big(([[\phi,\phi^\dagger],\phi]
 -v^2\phi)^\dagger ([[\phi,\phi^\dagger],\phi] -v^2\phi)\Big). \]
Here $D_\mu=\partial_\mu+[A_\mu,\cdot]$ is the covariant derivative, ${\rm tr}$ refers to the trace
in a finite dimensional representation of the compact semi-simple Lie group $\mathcal{G}$
to which the gauge fields $A_\mu$ and the charged scalar matter fields $\phi$ and $\phi^\dagger$ belong.
The parameter $v^2>0$ is the symmetry breaking parameter, $\epsilon^{\mu\nu\rho}$ is the Levi-Civita antisymmetric tensor
with $\epsilon^{012}=1$, and $\kappa>0$ is the Chern-Simons coupling parameter.
In the static situation, by the Bogomolinyi reduction argument,  one can obtain the self-dual equations
of the above Lagrangian :
\begin{equation}
 \left\{ \begin{array}{l}
 D_{-}\phi  =0,  \label{cs2} \\
 F_{+-}=\frac{1}{\kappa^{2}}\left[ v^2\phi -\left[ \left[ \phi ,\phi ^{\dagger}%
 \right] ,\phi \right] ,\phi ^{\dagger}\right] ,
\end{array} \right.
\end{equation}
where $D_{-}=D_{1}-{\rm i}D_{2}$ and $F_{+-}=\partial_{+}A_{-}-\partial _{-}A_{+}+\left[ A_{+},A_{-}\right]$
with $A_{\pm}=A_{1}\pm {\rm i}A_{2}$ and $\partial_{\pm} =\partial_{1}\pm {\rm i}\partial_{2}$.
It is well known that a solution of the self-dual equations is automatically a critical point of the Lagrangian.
Dunne considered a simplified form of the self-dual system \eqref{cs2} by an Ansatz, in which the fields
$\phi$ and $A$ are algebraically restricted:%
\begin{equation*}
 \phi =\sum_{a=1}^{r}\phi^{a}E_{a}, \qquad A_\mu= {\rm i}\sum_{a=1}^r A^a_{\mu}H_a,
\end{equation*}%
where $r$ is the rank of the gauge Lie algebra, $E_{a}$ is a simple root step operator,
$H_a$ is a Cartan subalgebra element,  $\phi^{a}$ is a complex valued function,
and $A^a_{\mu}$ is a real valued function. Let further%
\begin{equation*}
 u_{a}=\ln |\phi^{a}|^2 -\ln v^2, ~\quad a=1,\ldots,r.
\end{equation*}
Then, by the commutator relations
\[ [E_a,E_{-b}] =\delta_{ab}H_a, \qquad [H_a,H_{\pm b}] =\pm K_{ab}E_{\pm b}, \]
\eqref{cs2} is reduced to the following system of equations:%
\begin{equation}
  \label{cs3}
 \Delta u_{a} +\frac{v^4}{\kappa^{2}} \sum_{b=1}^{r} K_{ba}e^{u_{b}}
 - \frac{v^4}{\kappa^{2}} \sum_{b=1}^{r} \sum_{c=1}^{r} e^{u_{b}}K_{cb}e^{u_{c}}K_{ba}
 =4\pi \sum_{j=1}^{N_{a}}\delta _{p_{j}^{a}} \quad\mbox{in }~ \mathbb{R}^2,
\end{equation}%
where $a=1,\ldots,r$, ~$K=\left( K_{ab}\right) $ is the Cartan matrix of a semi-simple Lie algebra,
$\big\{ p_{j}^{a}\big\}$ are (not necessarily distinct) zeros of $\phi^{a}$, which are called vortex points.
We refer to \cite{D1,T2,yang1} for the detailed derivation from (\ref{cs2}) to (\ref{cs3}).

In this paper, we set $v^4=\kappa^2$ without loss of generality and consider only the case $r=2$,
which is the simplest among non-Abelian models.  Practically, if $r=2$, then there are only three different gauge groups, that is, $K=SU(3)$, $SO(5)$, and $G_2$.
There may exist three types of solutions to \eqref{cs3} according to their asymptotic behaviors at $\infty$ as follows:
\begin{enumerate}
 \item[(i)] $(u_1,u_2)$ is called a topological solution if  \\
 $\displaystyle\lim_{|x|\to\infty} u_a(x) = \ln \Big((K^{-1})_{1a} +(K^{-1})_{2a} \Big)$,
 ~~($a=1,2$)
 \item[(ii)] $(u_1,u_2)$ is called a non-topological solution if  \\
 $\displaystyle\lim_{|x|\to\infty} u_1(x) =\lim_{|x|\to\infty} u_2(x) =-\infty$,
 \item[(iii)] $(u_1,u_2)$ is called a mixed type solution if \\
 either $\displaystyle\lim_{|x|\to\infty} \big(u_1(x), u_2(x)\big) =(-\ln K_{11}, -\infty)$, \\
 or $\displaystyle\lim_{|x|\to\infty} \big(u_1(x),u_2(x)\big) =(-\infty, -\ln K_{22})$.
\end{enumerate}
We note that the first case (i) is valid only if $(K^{-1})_{1a} +(K^{-1})_{2a}>0$ ($a=1,2$).

The simplest case of \eqref{cs3} may be when the gauge group is Abelian, i.e. $U(1)$.
In this case, \eqref{cs3} is reduced to the following single equation.
\begin{align}
 \Delta u +e^{u}(1-e^{u}) = 4\pi \sum_{j=1}^{N_{1}}\delta_{p_{j}}
 \quad\mbox{in }~ \mathbb{R}^2.  \label{eq:CSH-top}
\end{align}
The equation \eqref{eq:CSH-top} is called the  $U(1)$  Chern-Simons Higgs equation\cite{HKP,JW} and
has been proposed in an attempt to explain high temperature superconductivity or anyonic excitations.
 \eqref{eq:CSH-top} admits only topological and nontopological solutions and has been studied extensively(see \cite{CFL,CKL,HKP,JW,LY1,NT,S-W1,S-W2} and references therein).
In particular, the existence of a topological solution of \eqref{eq:CSH-top} has been completely settled\cite{S-W1,wang} and
that of a nontopological solution has been settled almost\cite{CKL}.
Further, if all the vortex points coincide, $p_j={\bf 0}$ for all $j$,
it is known that every topological solution of \eqref{eq:CSH-top} is radially symmetric\cite{H}, unique\cite{CHMY}, and non-degenerate\cite{Ch}.

When the Cartan matrix is $K=\begin{pmatrix} 2 & -b \\ -a & 2 \end{pmatrix}$,
then the system \eqref{cs3} becomes the following nonlinear elliptic system:
\begin{equation}
  \label{eq:main}
 \left\{ \begin{aligned}
& \Delta u_1 +2e^{u_1} -ae^{u_2} -4e^{2u_1} +2ae^{2u_2} -a(b-2) e^{u_1+u_2}
 = 4\pi \sum_{j=1}^{N_{1}}\delta _{p_{j}} \\
& \Delta u_2 +2e^{u_2} -be^{u_1} -4e^{2u_2} +2be^{2u_1} -b(a-2) e^{u_1+u_2}
 = 4\pi \sum_{k=1}^{N_{2}}\delta _{q_{k}}%
\end{aligned} \right. \quad\mbox{in }~ \mathbb{R}^{2},
\end{equation}
where the constants $a$ and $b$ are given by
\[ \begin{pmatrix} a\\ b \end{pmatrix} \left(or\; \begin{pmatrix}  b \\ a \\ \end{pmatrix}  \right)
    = \begin{pmatrix}  1 \\  1 \\ \end{pmatrix},
      \quad \begin{pmatrix}  1 \\  2 \\ \end{pmatrix},
      \quad \begin{pmatrix}  1 \\  3 \\ \end{pmatrix}. \]
Each case arises from the Chern-Simons $SU(3)$, $SO(5)$, and $G_2$ models, respectively.
 In the conventional classification of
systems of equations, \eqref{eq:main} is neither cooperative nor competitive, that is, each nonlinear term in
\eqref{eq:main} is not monotone with respect to any of $u_1$ and $u_2$. This causes the main difficulty to  study
 \eqref{eq:main}. For example, unlike $U(1)$ Chern-Simons Higgs equation
 \eqref{cs2},
$L^1(\RN)$- norm   boundedness of  nonlinear terms in \eqref{eq:main} is not easy to prove
 even for  radially symmetric solutions \cite{HL}, and it  is still unknown  for non-radial solutions.

For any configuration $\{p_j^a\}$ in $\mathbb{R}^2$, Yang \cite{Yang0} proved the existence of
topological solutions of \eqref{eq:main} by the variational method and Moser-Trudinger inequality.  However, it is harder to find not only non-topological but also mixed type solutions due to logarithmic growth at infinity. Recently, there are some developments for non-topological solutions (see  \cite{ALW, CKL2, CKL3, KLL}). Meanwhile, analysis on mixed type solutions is still poor,
and only the existence results for radially symmetric mixed type solutions of \eqref{eq:main} have been
 established in \cite{CKL4,CKL5}.   In fact, mixed type solutions are not allowed in the $U(1)$ Chern-Simons theory
nor in Toda system.   Hence, it is characteristic to non-Abelian gauge theories and suggests new dynamics in these theories.
In shooting argument, radial mixed type solutions correspond to the boundary of the set of nontopological solutions\cite{Z}.  Therefore, analysis on mixed type solutions is meaningful not only due to physical reason but also to understand the non-topological solutions.  In this reason,  we shall establish the existence of mixed type solutions for any distribution of vortex points in this paper.

When the vortex points coincide, in \cite{CKL5}, they give a condition of possible bubbling for mixed type solutions.
They proved that, for each $\beta>\frac{bN_1}{2}+N_2+2$, \eqref{eq:main} admits a radially symmetric
solution $(u_1,u_2)$ such that
\[ u_1(x)\to -\ln 2 \quad\mbox{and}\quad u_2(x) =-2\beta\ln |x|+O(1) \quad\mbox{as }~ |x|\to\infty. \]
Furthermore, every radially symmetric solution $(u_1,u_2)$ of \eqref{eq:main} can be expressed as
\[ u_j(r) =2N_j\ln r +s_j +o(r) \quad\mbox{as }~ r\to 0 ~\quad (j=1,2) \]
for some $s_j\in\mathbb{R}$. It is also proved in \cite{CKL5} that if $s_2\to-\infty$ then
$\displaystyle \sup_{r\ge0} u_2(r) \to -\infty$, and $u_2(R)+2\ln R=O(1)$,
where $u_2(R)=\sup_{r\ge 0} u_2(r)$. Moreover, $\beta\to \frac{bN_1}{2}+N_2+2$ and
$u_1+\ln 2 -u \to 0$ in $C_{loc}^0([0,\infty))$, where $u$ is the radially symmetric topological solution of
the Chern-Simons equation, \eqref{eq:CSH-top}, that is, $u$ satisfies the following boundary condition:  \begin{align}
  u(x)\to 0 \quad\mbox{as }~ & |x|\to\infty.  \label{eq:CSH-topBC}
\end{align}

  However, we need  a different approach to find mixed type solutions of \eqref{eq:main} with an  arbitrary configuration of vortex points. For this purpose, the degree theory in \cite{CKL5} would be a powerful tool.  For example, to $U(1)$ Chern-Simons Higgs equation
 \eqref{cs2},  Choe, Kim, Lin in \cite{CKL} applied the degree theory and  almost completed finding solutions for an arbitrary configuration of vortex points.   To apply the degree theory to \eqref{eq:main}, as the first step, we should find when a priori bound would be broken, that is when a partially blowing up mixed type solution exists.

To find a partially blowing up mixed type solution $(u_1,u_2)$ of \eqref{eq:main},  we consider an equivalent problem by using the different scales for $(u_1,u_2)$.
As in \cite{KLL}, we introduce a small scaling parameter $\e>0$ and let
 \[ \hat{u}_j(x) =u_j(x/\e) \quad\mbox{for }~ x\in\mathbb{R}^2. \quad (j=1,2) \]
Note that \eqref{eq:main} is equivalent to the following system
\begin{align}
& \Delta u_1 =4e^{2u_1} -2e^{u_1} +ae^{u_2} -2ae^{2u_2} +a(b-2)e^{u_1+u_2} +\sum_{j=1}^{N_1}4\pi\delta_{p_j},
 \label{eq:main-u1} \\
& \Delta\Big( \frac{b}{2} \hat{u}_1 +\hat{u}_2\Big) = \frac{1}{2\e^2}(ab-4)
 (e^{\hat{u}_2} -2e^{2\hat{u}_2} +be^{\hat{u}_1 +\hat{u}_2})
 +\sum_{j=1}^{N_1} 2\pi b\delta_{\e p_j} +\sum_{k=1}^{N_2} 4\pi\delta_{\e q_k}. \label{eq:main-u2}
\end{align}
Inspired by \cite{CKL5}, we look for a family of solutions $(u_1,u_2)$ such that
\[ u_1+\ln 2 -U \to 0 \quad\mbox{in}~ C^0(\mathbb{R}^2),\] where $U$ is a topological solution of \eqref{eq:CSH-top}, and \[\hat{u}_2 -2\ln\varepsilon \to W \quad\mbox{in}~ C_{loc}^0(\mathbb{R}^2\backslash\{{\bf 0}\}) \]
for some function $W$ as $\varepsilon\to 0$. Then
 $\hat{u}_1 \to -\ln 2$ in $C_{loc}^0(\mathbb{R}^2\backslash\{{\bf 0}\})$. Hence
\[ \frac{1}{2\e^2}(ab-4) (e^{\hat{u}_2} -2e^{2\hat{u}_2} +be^{\hat{u}_1 +\hat{u}_2})
 \to \frac14(ab-4)(2+b)e^W \quad\mbox{in }~ C_{loc}^0(\mathbb{R}^2\backslash\{{\bf 0}\}). \]
So it is reasonable to choose $W$ as a solution of   the Liouville equation:
\begin{equation}
  \label{eq:W-equation}
 \left \{  \begin{aligned}
& \Delta W +\frac14(4-ab)(2+b) e^W =2\pi (bN_1+2N_2) \delta_{\bf 0}, \\
&~ e^{W} \in L^1(\mathbb{R}^2).
\end{aligned} \right.
\end{equation}
The arguments above give us some motivation to  construct  a partially blowing up mixed type solution. Indeed, we have the following result.
\begin{theorem}
  \label{T121}
Assume that \eqref{eq:CSH-top} admits a non-degenerate topological solution $U(x)$.
Suppose one of the following conditions holds.
\begin{enumerate}
 \item $bN_1 +2N_2 \ge 3$, or
 \item $bN_1 +2N_2 \le 2$ and $p_j=q_k=\{{\bf 0}\}$ for all $j$ and $k$.
\end{enumerate}
Then, there exists a constant $\e_0>0$ such that for each $\e\in (0,\e_0)$, the system \eqref{eq:main}
has a mixed type solution $( u_{1,\e}, u_{2,\e})$ such that
\[ u_{1,\e}(x) \to -\ln 2 \quad\mbox{and}\quad u_{2,\e}(x)=-2\beta_{\e}\ln|x|+O(1)
 \quad\mbox{as }~ |x| \to\infty. \]
for some $\beta_\e\in\mathbb{R}$, where $\displaystyle \beta_\e =\frac{bN_1}{2}+N_2+2 +O(\e^2)$ as $\e\to 0$.

Moreover, as $\e\to 0$, $(u_{1,\e},u_{2,\e})$ satisfies
\begin{align*}
& u_{1,\e} -U +\ln 2 \to 0 ~\mbox{ in } C^0(\mathbb{R}^2) \quad\mbox{and} \\
& \Big(u_{2,\e} +\frac{b}{2}U \Big)(\cdot/\e) -2\ln\e \to W \mbox{ in } C^0_{loc}(\RN)
\end{align*}
 where $W$  is a solution of \eqref{eq:W-equation}.
\end{theorem}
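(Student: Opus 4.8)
The plan is to construct $(u_{1,\e},u_{2,\e})$ by a singular perturbation argument combined with a Lyapunov--Schmidt reduction, organized around the two decoupled limiting profiles: the (non-degenerate) topological solution $U$ of \eqref{eq:CSH-top} for the first component, and a scaled Liouville bubble $W$ solving \eqref{eq:W-equation} for the second. It is convenient to work with the variables $(u_1,v)$, $v=\tfrac b2 u_1+u_2$, so that the second equation is \eqref{eq:main-u2}, whose right-hand side, after the dilation $x\mapsto x/\e$, is a perturbation of $-\tfrac14(4-ab)(2+b)e^{W}$. As an approximate solution I would take $u_1^{\mathrm{ap}}=U-\ln 2$, and, in the rescaled variable $y=\e x$, $v^{\mathrm{ap}}=2\ln\e+W_\e(y)+(\text{harmonic correction})$, where $W_\e$ solves the Liouville equation $\Delta W_\e+\tfrac14(4-ab)(2+b)e^{W_\e}=2\pi b\sum_j\delta_{\e p_j}+4\pi\sum_k\delta_{\e q_k}$ with $e^{W_\e}\in L^1(\RN)$. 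When all vortices coincide at $\mathbf 0$ (case (2)) this is simply a dilate of the radial solution of \eqref{eq:W-equation}; for a general configuration (case (1)) one invokes the classical theory of Liouville equations with several Dirac masses, and it is here that $bN_1+2N_2\ge 3$ is needed, to produce a non-degenerate bubbling profile of the correct shape. The harmonic correction is chosen so that $u_2^{\mathrm{ap}}(x)=-2\beta_\e\ln|x|+O(1)$ as $|x|\to\infty$ with $\beta_\e$ a free parameter near $\tfrac{bN_1}{2}+N_2+2$, to be fixed at the end.

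Next I would estimate the residual $\mathcal E_\e$ obtained by substituting $(u_1^{\mathrm{ap}},v^{\mathrm{ap}})$ into \eqref{eq:main}. In the bulk the error is $O(\e^2)$: the terms $e^{2\hat u_2}$ and $e^{\hat u_1+\hat u_2}$ in \eqref{eq:main-u2} contribute $O(\e^2)$, and the cross terms $e^{u_2}$, $e^{2u_2}$, $e^{u_1+u_2}$ in the $u_1$-equation are $O(\e^2)$ in the relevant weighted norm. The delicate point, highlighted in the abstract, is the contribution near spatial infinity: since $u_{2,\e}$ decays only like $-2\beta_\e\ln|x|$, the nonlinearities $e^{u_2}$ and $e^{u_1+u_2}$ carry a non-negligible $L^1$-mass ``from infinity'', and this mass must match the total vorticity of the Liouville bubble. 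Making this quantitative is essentially a Pohozaev-type identity, obtained by testing the equations against $x\cdot\nabla(\cdot)$ on large annuli and tracking the boundary fluxes; this is what pins down $\beta_\e=\tfrac{bN_1}{2}+N_2+2+O(\e^2)$.

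Then comes the linear theory. Linearizing at $(u_1^{\mathrm{ap}},v^{\mathrm{ap}})$, the operator is to leading order block-diagonal: the $u_1$-block is the linearization of \eqref{eq:CSH-top} at $U$, which is an isomorphism on the appropriate weighted spaces precisely by the non-degeneracy hypothesis on $U$; the $v$-block is the Liouville linearization $L_W=\Delta+\tfrac14(4-ab)(2+b)e^{W}$, whose kernel is explicitly known (the dilation mode of $W$, together with finitely many further modes in case (1) coming from the multiplicity and geometry of the Dirac masses). The off-diagonal coupling is $O(\e^2)$, hence a small perturbation, so one gets $\e$-uniform invertibility on the complement of $\ker L_W$, provided one works in a two-scale weighted norm that simultaneously resolves the $O(1)$ scale of $U$ and the $O(1/\e)$ scale of $W$ while still controlling the slowly decaying tail of $e^{u_2}$. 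A contraction-mapping argument then solves \eqref{eq:main} modulo $\ker L_W$: for each admissible $\beta_\e$ (and Liouville dilation parameter) one obtains corrections $(\psi_1,\psi_2)$ with $\|\psi_j\|\to 0$ as $\e\to 0$.

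Finally, the reduced equation — the vanishing of the $\ker L_W$-component — is a finite-dimensional system in $\beta_\e$ and the Liouville parameters whose leading-order part is exactly the mass-balance identity from the second step; it is solvable by the implicit function theorem and forces $\beta_\e=\tfrac{bN_1}{2}+N_2+2+O(\e^2)$. Unwinding the construction yields $u_{1,\e}=U-\ln 2+\psi_1\to -\ln 2$ in $C^0(\RN)$ (so the first component is topological at the value $-\ln K_{11}=-\ln 2$), while $\big(u_{2,\e}+\tfrac b2 U\big)(\cdot/\e)-2\ln\e\to W$ in $C^0_{loc}(\RN)$ and $u_{2,\e}(x)=-2\beta_\e\ln|x|+O(1)$ (so the second component tends to $-\infty$); hence $(u_{1,\e},u_{2,\e})$ is a mixed type solution, as claimed. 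The main obstacle I anticipate is precisely the interaction of the two different scales with the ``mass from infinity'': one must choose function spaces in which the dilated Liouville linearization is uniformly invertible while the heavy-tailed term $e^{u_2}$ stays under control, and the Pohozaev identity must be sharp enough to see the $O(\e^2)$ correction to $\beta_\e$; in the non-coincident case the limiting Liouville profile cannot be taken radial, and $bN_1+2N_2\ge 3$ is used in an essential way both to produce a non-degenerate bubble and to keep the reduced problem solvable.
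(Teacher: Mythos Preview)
Your broad outline (two-scale Lyapunov--Schmidt reduction around $U$ and a Liouville bubble $W$, with a contraction step followed by a finite-dimensional reduction) matches the paper, but two of the steps you propose would not close as written.

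First, the ansatz $u_1^{\mathrm{ap}}=U-\ln 2$ is precisely what the paper warns is \emph{not accurate enough}. The cross terms $ae^{u_2}+a(b-2)e^{u_1+u_2}$ in the first equation are supported on the large scale $|x|\sim 1/\e$ where $U$ has already decayed; no Pohozaev or harmonic correction to $v$ fixes this, because the imbalance lives in the $u_1$-equation. The paper's key device is to add an explicit correction $\e^2\varphi_\e(x)=-\tfrac{ab}{2}\e^2e^{W_\alpha(\e x)}\chi(x)$ to $u_1^{\mathrm{ap}}$, chosen so that (since $f'(0)=-1$) the identity $-\varphi_\e\approx \tfrac{1}{\e^2}\big(ae^{u_2}+a(b-2)e^{u_1+u_2}\big)$ holds at infinity. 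Only with this improved ansatz does the remaining correction $\e^2\xi$ become $O(\e^3)$ in $H^2$, which is what allows the reduced computation to go through with an explicit leading term. Your proposal never identifies this correction, and your ``mass from infinity via Pohozaev'' substitute addresses a different quantity.

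Second, the reduced problem is not a mass-balance in $\beta_\e$ and a dilation parameter. In the paper $\beta_\e$ is read off \emph{a posteriori} from the logarithmic growth of $\eta_{\e,\alpha}\in X$; it is not a matching unknown. The reduction is in the complex parameter $\alpha$ of the single-singularity Liouville solution $W_\alpha$ (not a multi-Dirac bubble at the rescaled vortices), and the two constraints come from testing against the translation-type kernel elements $Z_{\alpha,1},Z_{\alpha,2}$, not $x\cdot\nabla$. The leading-order reduced map is $\triangle(\alpha)=c_{a,b,\lambda}\,\alpha+O(|\alpha|^2)$, and the nonzero constant $c_{a,b,\lambda}$ arises from $\int_{\RN}e^{2W_\alpha}Z_\alpha\,dx$---here the $\varphi_\e$ correction is again essential, since it produces the coefficient $(16-ab^3)/16$ in front of $e^{2W_\alpha}$. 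Finally, the hypothesis $bN_1+2N_2\ge 3$ (equivalently $\lambda\ge 5/2$) is not used to manufacture a non-degenerate multi-singularity bubble; it guarantees that either $\lambda\notin\mathbb N$ (so $Z_{\alpha,1},Z_{\alpha,2}$ are absent and no reduction is needed) or $\lambda\ge 3$ (so the vortex-splitting error $H_\e$ and the terms $|x|^{-2}$ it generates integrate harmlessly in the reduced computation). The borderline integer case $\lambda=2$ with non-coincident vortices is exactly the open case the paper flags in its closing remark.
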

For some technical reason, we assume that \eqref{eq:CSH-top} admits
a non-degenerate solution.  Here, by nondegeneracy of a solution $U$, we mean that {\it the linearized operator
$$\displaystyle\mathcal{L}_1=\Delta +e^U(1-2e^U)$$
is a continuous bijection from $H^2(\RN)$ onto $L^2(\RN)$, and the inverse operator $\mathcal{L}_1^{-1}$ is also continuous}.
However, this nondegeneracy condition is reasonable counting on the general transversality theorem (See for example theorem 1.7.5 in \cite{Niren}).  In fact, if either $\max_{1\le j\le N_1} |p_j|$ is sufficiently small or
$\min_{1\le j<k\le N_1}|p_j-p_k|$ is sufficiently large
then \eqref{eq:CSH-top} admits a unique topological solution, which is non-degenerate \cite{Ch}.  Therefore Theorem \ref{T121} extends the results in \cite{CKL4,CKL5} to an arbitrary configuration of $\{p_j\}$
as long as $U$ is non-degenerate and the decay rate is small enough.

%This observation motivates us to construct a family of
%mixed type solutions to \eqref{eq:main} such that one component satisfies $-2\beta_{\e}\ln|x|+O(1)$ as
%$\left \vert x\right \vert\rightarrow+\infty$, where $\beta_\e$ is close to $N_2+\frac{N_1}{2}+2$.

It is interesting to see that $u_{1,\e}$ converges in itself while $u_2$ converges after a suitable scaling.  This
means they live in different scalings.  Due to the boundary condition at infinity, one might want to choose an approximate solution $\tilde{V}_{1,\e} =-\ln 2 +U$ for $u_{1,\e}$.
But it turns out that $\tilde{V}_{1,\e} =-\ln 2 +U$ is not accurate enough since  $u_{2,\e}$ shows bubbling phenomena near $\infty$.  Indeed,  $\tilde{V}_{1,\e} =-\ln 2 +U$ cannot balance the mass contribution  of $e^{u_{2,\e}}$ from  $\infty$,
since $U$ decays exponentially fast near $\infty$.

%even though the contraction mapping theorem works well.

To overcome this difficulty,
we should compare an effect from  $2e^{u_{1,\varepsilon}}(1-2e^{u_{1,\varepsilon}})$ and
an effect from $e^{u_{2,\varepsilon}}$ to construct a suitable approximate solution for $u_{1,\e}$.
We remark that the similar situation also occurs in \cite{KLL},
where they overcome the difficulty by refining the errors with the additional term
$\frac12(W^*({\bf 0}) -W^*(\e x))$, where $W^*$ is the regular part of the solution $W$ of \eqref{eq:W-equation}.
However, in our case the term $\frac12(W^*({\bf 0}) -W^*(\e x))$ is not appropriate,
since it grows logarithmically near $\infty$.  To remove this obstacle,
we note that if $f(t):=e^t(1-e^t)$, then $f(u_{1,\e}+\ln 2)=f(0)+f'(0)(u_{1,\e}+\ln 2)+O(|u_{1,\e}+\ln 2|^2)$, $f(0)=0$, and  $f'(0)=-1$. It implies that $u_{1,\e}(x)+\ln 2$ should be close to $-\frac{ab}{2}e^{u_{2,\e}\left(
 x\right)}$ when $|x|\gg1$  to balance the mass contribution of $e^{u_{2,\varepsilon}}$ at infinity.

In conclusion, we are going to use a combination of topological solution $U$ of \eqref{eq:CSH-top} and  $-\frac{ab}{2}\e^2e^{W\left(\e
 x\right)}$  together as an approximate solution for $u_{1,\varepsilon}$ (see the exact form of  the approximate solution in \eqref{eq:V-approximation}) and derive the correct finite dimensional reduced problem.
Then, we shall show the finite dimensional reduced problem is invertible in a suitable space and find a family of mixed type solutions.

This paper is organized as follows. In Section 2, we introduce an approximate solution and
review useful properties of the linearized operator. In Section 3, we present the proof of Theorem \ref{T121}.

%%%%%%%%%%%%%%%%%%%%%%%%%%%%%%%%%

\setcounter{equation}{0}
\section{Basic Estimates: Approximation Solutions}

For simplicity, we let
\begin{equation}
  \label{eq:f}
 f(t) =e^t(1-e^t) \quad\mbox{for }~ t\in\mathbb{R}.
\end{equation}
We now recall some well-known results. If $U$ is a solution of \eqref{eq:CSH-top} then $U\le 0$ in $\mathbb{R}^2$.
Moreover, there exist constants $C_0,R_0>1$, which may depend on $U$, such that
\[ |U(x)|+|\nabla U(x)| \le C_0e^{-|x|} \quad\mbox{for }~ |x|\ge R_0. \]

Every solution of the Liouville equation \eqref{eq:W-equation}
is completely classified by Prajapat and Tarantello \cite{PT}, and it takes the form
\begin{equation}
  \label{eq:W}
 W_{\mu,\alpha}(z)= \ln\frac{32e^\mu \lambda^2|z|^{2\lambda-2}}{(4-ab)(2+b)
 (1+e^{\mu}|z^\lambda +\alpha|^2)^2}, ~\quad z=x_1+{\rm i}x_2 \in\mathbb{C},
\end{equation}
where $\alpha\in\mathbb{C}$ and $\mu\in\mathbb{R}$ are parameters, and
\[ \lambda= \frac{bN_1}{2} +N_2 +1. \]
Recall that $\alpha=0$ if $\lambda\notin\mathbb{N}$.
To simplify notations, we write
\[ W_\alpha(z) =W_{0,\alpha}(z) \]
and
\[ {W}_\alpha^*(z) =W_\alpha(z) -(2\lambda-2)\ln |z|. \]
\subsection{Function spaces}
We introduce some function spaces we will work on.
Let
\[ \sigma(x) =1+|x|, ~\quad x\in\mathbb{R}^2. \]
Fix a constant $0<d<1/4$. We define the function space $X$ by
\[ X=\{ v\in H_{loc}^2(\mathbb{R}^2) \mid \|v\|_X<\infty\}, \]
where
\[ \|v\|_X^2 =\big\| \sigma^{1+d}\Delta v\big\|_{L^2(\mathbb{R}^2)}^2
 +\big\| \sigma^{-1-d}v\big\|_{L^2(\mathbb{R}^2)}^2. \]
We define the function space $Y$ by
\[ Y=\{h\in L_{loc}^2(\mathbb{R}^2) \mid \|h\|_Y :=\big\| \sigma^{1+d}h\big\|_{L^2(\mathbb{R}^2)} <\infty\}, \]
We also define two inner products $(\cdot,\cdot)_{\LN}$ and $(\cdot,\cdot)_Y$ as follows.
\begin{align*}
 (u,\xi)_{\LN} &= \int_{\RN} u\xi dx, ~\quad u,\xi\in \LN, \\
 (\eta,h)_Y &= \int_{\RN} \sigma^{2+2d}\eta h dx, ~\quad \eta, h\in Y.
\end{align*}

For $z=x_1+{\rm i}x_2 \in\mathbb{C}$, we define
\begin{align*}
 Z_{\alpha,0}(z) &= \frac{\partial}{\partial\mu}\Big|_{\mu=0} W_{\mu,\alpha}(z)
 =\frac{1-|z^\lambda +\alpha|^2}{1+|z^\lambda +\alpha|^2}, \\
 Z_{\alpha,1}(z) &= -\frac12 \mbox{Re}\Big( \frac{\partial W_\alpha}{\partial\overline{\alpha}}\Big)(z)
 \Big|_{\mu=0} =\frac{\mbox{Re}(z^\lambda +\alpha)}{1+|z^\lambda +\alpha|^2}, \\
 Z_{\alpha,2}(z) &= -\frac12 \mbox{Im}\Big( \frac{\partial W_\alpha}{\partial\overline{\alpha}}\Big)(z)
 \Big|_{\mu=0} =\frac{\mbox{Im}(z^\lambda +\alpha)}{1+|z^\lambda +\alpha|^2},
\end{align*}
where $\mbox{Re}$ and $\mbox{Im}$ denote the real and imaginary parts, respectively.
It is easily checked that $Z_{\alpha,\mu}\in X$ for $\mu=0,1,2$.
Moreover, $\sigma^{-2-2d}Z_{\alpha,j}\in Y$ and $(h,Z_{\alpha,j})_{\LN}=(h,\sigma^{-2-2d}Z_{\alpha,j})_Y$
for $j=1,2$.

We now introduce a subspace $E_\alpha$ of $X$ as follows. We define
\begin{equation*}
\begin{aligned}E_\alpha =\left\{ \begin{array}{ll}  &\{\xi\in X\mid \big(\xi, \, e^{W_\alpha} Z_{\alpha,i}\big)_{\LN}=0 \quad(i=0,1,2)\}
 \quad\mbox{if }~ \lambda \in\mathbb{N},
\\&\{\xi\in X\mid \big(\xi, \, e^{W_0} Z_{0,0}\big)_{\LN} =0\}
 \quad\mbox{if }~ \lambda\notin \mathbb{N}. \end{array}\right. \end{aligned}
\end{equation*}

We also introduce a subspace $F_\alpha$ of $Y$ as follows. We define
\begin{equation*}
\begin{aligned}F_\alpha =\left\{ \begin{array}{ll}  &\{ h\in Y\mid \big(h, Z_{\alpha,i}\big)_{\LN} =0 \quad (i=1,2)\}
 \quad\mbox{if }~ \lambda\in\mathbb{N},
\\&Y \quad\mbox{if }~ \lambda\notin\mathbb{N}. \end{array}\right. \end{aligned}
\end{equation*}
\begin{lemma}
  \label{lem:T-aux}
Suppose $\lambda\in\mathbb{N}$. There exists a constant $\e_0>0$ such that
if $|\alpha|<\e_0$ then for each $h\in Y$ then there exists a unique pair of constants
$(c_{\alpha,1}, c_{\alpha,2}) \in\mathbb{R}^2$ satisfying
\[ h -c_{\alpha,1}\sigma^{-2-2d}Z_{\alpha,1} -c_{\alpha,2}\sigma^{-2-2d}Z_{\alpha,2} \in F_\alpha. \]
\end{lemma}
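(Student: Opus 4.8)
The plan is to reduce the assertion to the invertibility of a single $2\times2$ matrix depending on $\alpha$, to compute that matrix exactly at $\alpha=0$, and then to close by a continuity (perturbation) argument. First I would observe that, since $\lambda\in\mathbb{N}$, the requirement $h-c_{\alpha,1}\sigma^{-2-2d}Z_{\alpha,1}-c_{\alpha,2}\sigma^{-2-2d}Z_{\alpha,2}\in F_\alpha$ is, by definition of $F_\alpha$, equivalent to the two scalar identities $\big(h-\sum_{i}c_{\alpha,i}\sigma^{-2-2d}Z_{\alpha,i},\,Z_{\alpha,j}\big)_{\LN}=0$, $j=1,2$. Using $(\eta,Z_{\alpha,j})_{\LN}=(\eta,\sigma^{-2-2d}Z_{\alpha,j})_Y$ this becomes the linear system $M_\alpha\,c_\alpha=b_\alpha$, where $c_\alpha=(c_{\alpha,1},c_{\alpha,2})^{\mathrm t}$, $b_\alpha=\big((h,Z_{\alpha,1})_{\LN},(h,Z_{\alpha,2})_{\LN}\big)^{\mathrm t}$, and $M_\alpha=(m_{ij}(\alpha))$ with
\[ m_{ij}(\alpha)=\big(\sigma^{-2-2d}Z_{\alpha,i},\,\sigma^{-2-2d}Z_{\alpha,j}\big)_Y=\int_{\RN}\sigma^{-2-2d}\,Z_{\alpha,i}\,Z_{\alpha,j}\,dx; \]
that is, $M_\alpha$ is precisely the Gram matrix of $\{\sigma^{-2-2d}Z_{\alpha,1},\sigma^{-2-2d}Z_{\alpha,2}\}$ in the inner product $(\cdot,\cdot)_Y$. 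Before anything else I would record the elementary bound $|Z_{\alpha,j}(z)|\le\frac{|z^\lambda+\alpha|}{1+|z^\lambda+\alpha|^2}\le\frac12$: it gives $\sigma^{-2-2d}|Z_{\alpha,i}Z_{\alpha,j}|\le\frac14\sigma^{-2-2d}\in L^1(\RN)$ (using $d>0$), hence $\sigma^{-1-d}Z_{\alpha,j}\in\LN$, i.e. $\sigma^{-2-2d}Z_{\alpha,j}\in Y$, and via Cauchy--Schwarz in $Y$ the finiteness of $b_\alpha$ for every $h\in Y$. Thus everything reduces to showing that $M_\alpha$ is invertible for $|\alpha|$ small; then $c_\alpha=M_\alpha^{-1}b_\alpha$ is the unique pair sought (and depends linearly and continuously on $h$).

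Next I would compute $M_0$ exactly. Since $\lambda\in\mathbb{N}$, in polar coordinates $z=re^{\mathrm i\theta}$ the power $z^\lambda=r^\lambda e^{\mathrm i\lambda\theta}$ is single-valued, so $Z_{0,1}=\frac{r^\lambda\cos\lambda\theta}{1+r^{2\lambda}}$, $Z_{0,2}=\frac{r^\lambda\sin\lambda\theta}{1+r^{2\lambda}}$, while $\sigma=1+r$ is radial. Separating variables, the off-diagonal entry carries the angular factor $\int_0^{2\pi}\cos\lambda\theta\,\sin\lambda\theta\,d\theta=\frac12\int_0^{2\pi}\sin2\lambda\theta\,d\theta=0$, so $m_{12}(0)=0$, whereas $\int_0^{2\pi}\cos^2\lambda\theta\,d\theta=\int_0^{2\pi}\sin^2\lambda\theta\,d\theta=\pi$ yields
\[ m_{11}(0)=m_{22}(0)=\pi\int_0^\infty\frac{r^{2\lambda+1}}{(1+r)^{2+2d}(1+r^{2\lambda})^2}\,dr=:m_0\in(0,\infty), \]
the integral converging because its integrand is $O(r^{2\lambda+1})$ as $r\to0$ and $O(r^{-1-2\lambda-2d})$ as $r\to\infty$. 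Hence $M_0=m_0\,I$ is invertible with $\det M_0=m_0^2>0$.

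Finally, I would close the argument by continuity in $\alpha$. For each fixed $z$ the integrand $\sigma^{-2-2d}Z_{\alpha,i}(z)Z_{\alpha,j}(z)$ depends continuously on $\alpha$ and is dominated by $\frac14\sigma^{-2-2d}\in L^1(\RN)$ uniformly in $\alpha$, so dominated convergence makes $\alpha\mapsto M_\alpha$, and hence $\alpha\mapsto\det M_\alpha$, continuous near $\alpha=0$. Since $\det M_0=m_0^2>0$, there is $\e_0>0$ with $\det M_\alpha\neq0$ for $|\alpha|<\e_0$; for such $\alpha$ the system $M_\alpha c_\alpha=b_\alpha$ has a unique solution $(c_{\alpha,1},c_{\alpha,2})\in\mathbb{R}^2$, which is exactly the claim. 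I do not expect a genuine obstacle here: the whole lemma hinges on the angular orthogonality that diagonalizes $M_0$, after which the perturbation step is automatic. The only point that needs a little care is the integrability bookkeeping — membership of $\sigma^{-2-2d}Z_{\alpha,j}$ in $Y$ and finiteness of the pairings $(h,Z_{\alpha,j})_{\LN}$ — which the uniform bound $|Z_{\alpha,j}|\le\frac12$ together with $\sigma^{-2-2d}\in L^1(\RN)$ settles.
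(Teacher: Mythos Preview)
Your proof is correct and follows essentially the same approach as the paper: reduce the condition to the $2\times2$ linear system with Gram matrix $M_\alpha=(m_{ij}(\alpha))$, compute $M_0$ explicitly via the angular orthogonality of $\cos\lambda\theta$ and $\sin\lambda\theta$ to obtain $M_0=m_0 I$ with $m_0>0$, and conclude by continuity that $\det M_\alpha>0$ for $|\alpha|$ small. Your write-up is slightly more detailed than the paper's (you make the integrability and dominated-convergence steps explicit), but the argument is the same.
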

\begin{proof}
Note that $h -c_{\alpha,1}\sigma^{-2-2d}Z_{\alpha,1} -c_{\alpha,2}\sigma^{-2-2d}Z_{\alpha,2} \in F_\alpha$
if and only if
\[ \int_{\mathbb{R}^2} hZ_{\alpha,j}dx =c_{\alpha,1}\int_{\RN} \sigma^{-2-2d}Z_{\alpha,1}Z_{\alpha,j}dx
 +c_{\alpha,2}\int_{\mathbb{R}^2} \sigma^{-2-2d}Z_{\alpha,2}Z_{\alpha,j}dx, ~\quad (j=1,2) \]
or equivalently,
\begin{equation}
  \label{eq:T-c}
 \begin{pmatrix} a_{11}(\alpha) & a_{12}(\alpha) \\ a_{21}(\alpha) & a_{22}(\alpha) \end{pmatrix}
 \begin{pmatrix} c_{\alpha,1} \\ c_{\alpha,2} \end{pmatrix} =
 \begin{pmatrix} b_1(h,\alpha) \\ b_2(h,\alpha) \end{pmatrix},
\end{equation}
where we set $\displaystyle a_{jk}(\alpha)=\int_{\mathbb{R}^2} \sigma^{-2-2d}Z_{\alpha,j}Z_{\alpha,k}dx$
and $\displaystyle b_j(h,\alpha) =\int_{\mathbb{R}^2} hZ_{\alpha,j}dx$ for simplicity.
It is easily checked that $a_{12}(0)=\alpha_{21}(0)=0$ and
\[ a_{11}(0)=a_{22}(0)= \int_0^\infty \frac{\pi r^{2\lambda+1}}{(1+r^{2\lambda})^2\sigma^{2+2d} } dr >0. \]
Consequently $a_{11}(\alpha)a_{22}(\alpha) -a_{12}(\alpha)a_{21}(\alpha)>0$ if $|\alpha|$ is sufficiently small,
which proves Lemma \ref{lem:T-aux}.
\end{proof}

For $|\alpha|<\e_0$, we define a projection map $T_\alpha:Y\to F_\alpha$ by
\begin{equation}
  \label{eq:T-definition}
 T_\alpha h = \left\{ \begin{array}{ll}
 h -c_{\alpha,1}\sigma^{-2-2d}Z_{\alpha,1} -c_{\alpha,2}\sigma^{-2-2d}Z_{\alpha,2}, &~ \lambda\in\mathbb{N}, \\
 h, &~ \lambda\notin\mathbb{N},
\end{array} \right.
\end{equation}
where the constants $c_{\alpha,1}$ and $c_{\alpha,2}$ are chosen so that \eqref{eq:T-c} holds.
Lemma \ref{lem:T-aux} implies that $T_\alpha$ is well defined if $|\alpha|<\e_0$.

\begin{lemma}
  \label{lem:T-bounded}
If $|\alpha|<\e_0$, there exists a constant $c=c(p_j,q_k)>0$ such that
\[ \|T_\alpha h\|_Y \le c\|h\|_Y \quad\mbox{for all }~ h\in Y. \]
\end{lemma}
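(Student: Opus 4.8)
The plan is to argue separately according to the two cases in the definition \eqref{eq:T-definition} of $T_\alpha$. When $\lambda\notin\mathbb{N}$ there is nothing to prove, since $T_\alpha h=h$ and one may take $c=1$. So assume $\lambda\in\mathbb{N}$; then
\[ T_\alpha h=h-c_{\alpha,1}\sigma^{-2-2d}Z_{\alpha,1}-c_{\alpha,2}\sigma^{-2-2d}Z_{\alpha,2}, \]
where $(c_{\alpha,1},c_{\alpha,2})$ solves the $2\times2$ system \eqref{eq:T-c}. By the triangle inequality in $Y$ it then suffices to bound, uniformly for $|\alpha|<\e_0$, the quantities $\|\sigma^{-2-2d}Z_{\alpha,j}\|_Y$ by a constant and $|c_{\alpha,j}|$ by a constant times $\|h\|_Y$.

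First I would record the elementary pointwise decay $|Z_{\alpha,j}(z)|\le C(1+|z|^\lambda)^{-1}$, valid for all $z$ and all $|\alpha|<\e_0$ with $C$ independent of $\alpha$: this follows from $|w|/(1+|w|^2)\le \tfrac12$ together with $|z^\lambda+\alpha|\ge\tfrac12|z|^\lambda$ once $|z|^\lambda\ge 2\e_0$. Consequently $\sigma^{-2-2d}|Z_{\alpha,j}|^2\le C(1+|x|)^{-2-2d}(1+|x|^\lambda)^{-2}\in L^1(\RN)$ with an $\alpha$-independent majorant, so
\[ \|\sigma^{-2-2d}Z_{\alpha,j}\|_Y^2=\int_{\RN}\sigma^{-2-2d}Z_{\alpha,j}^2\,dx=a_{jj}(\alpha)\le C \quad\mbox{for }~|\alpha|<\e_0. \]
Using the identity $(h,Z_{\alpha,j})_{\LN}=(h,\sigma^{-2-2d}Z_{\alpha,j})_Y$ recorded before the lemma and Cauchy--Schwarz in $Y$, I then obtain
\[ |b_j(h,\alpha)|=\big|(h,\sigma^{-2-2d}Z_{\alpha,j})_Y\big|\le\|h\|_Y\,\|\sigma^{-2-2d}Z_{\alpha,j}\|_Y\le C\|h\|_Y. \]

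It remains to invert the matrix $A(\alpha)=(a_{jk}(\alpha))$ with uniform control. By dominated convergence — again using the $\alpha$-independent pointwise bound on $Z_{\alpha,j}$ to dominate the integrands — the entries $a_{jk}(\alpha)$ depend continuously on $\alpha$; since $\det A(0)=a_{11}(0)^2>0$ and (by Lemma \ref{lem:T-aux}) $\det A(\alpha)>0$ for $|\alpha|<\e_0$, after shrinking $\e_0$ we have $\det A(\alpha)\ge\delta_0>0$ and $|a_{jk}(\alpha)|\le C$ on the closed ball $\{|\alpha|\le\e_0\}$. Hence $\|A(\alpha)^{-1}\|\le C$ uniformly, and \eqref{eq:T-c} gives $|c_{\alpha,j}|\le C\big(|b_1(h,\alpha)|+|b_2(h,\alpha)|\big)\le C\|h\|_Y$. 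Combining,
\[ \|T_\alpha h\|_Y\le\|h\|_Y+\sum_{j=1}^2|c_{\alpha,j}|\,\|\sigma^{-2-2d}Z_{\alpha,j}\|_Y\le c\|h\|_Y, \]
with $c$ depending only on $\lambda$, i.e. on $N_1,N_2$, hence on the configuration $(p_j,q_k)$.

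The computations are essentially routine; the one point requiring care — and hence the main obstacle — is the \emph{uniformity in $\alpha$}. Concretely, one must check that the integrals defining $a_{jk}(\alpha)$ and $b_j(h,\alpha)$ converge uniformly and depend continuously on $\alpha$ (handled by the $\alpha$-independent pointwise decay estimate and dominated convergence) and that $\det A(\alpha)$ stays bounded away from zero near $\alpha=0$ (handled by continuity together with Lemma \ref{lem:T-aux} and compactness of a small closed ball). Once these uniform estimates are in place, the bound on $\|T_\alpha h\|_Y$ follows immediately from the triangle inequality.
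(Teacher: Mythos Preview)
Your proof is correct and takes essentially the same approach as the paper: bound $|c_{\alpha,j}|\le C\|h\|_Y$ by applying Cauchy--Schwarz to $b_j(h,\alpha)=\int_{\RN} hZ_{\alpha,j}\,dx$ and inverting the $2\times 2$ system \eqref{eq:T-c}, then use the triangle inequality together with $\|\sigma^{-2-2d}Z_{\alpha,j}\|_Y\le C$. The only difference is that you spell out the uniformity in $\alpha$ (the pointwise decay of $Z_{\alpha,j}$ and the uniform lower bound on $\det A(\alpha)$) which the paper's short proof leaves implicit.
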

\begin{proof}
The case $\lambda\notin\mathbb{N}$ is trivial. Thus we assume that $\lambda\in\mathbb{N}$.
It follows from \eqref{eq:T-c} that
\begin{align*}
 |c_{\alpha,j}| &\le C\big( |b_1(h,\alpha)|+|b_2(h,\alpha)|\big) \\
&\le C\|h\|_Y \big(\|\sigma^{-1-d}Z_{\alpha,1}\|_{L^2(\mathbb{R}^2)}
 +\|\sigma^{-1-d}Z_{\alpha,2}\|_{L^2(\mathbb{R}^2)}\big) \le C\|h\|_Y.
\end{align*}
Therefore we obtain that
\[ \|T_\alpha h\|_Y \le \|h\|_Y +|c_{\alpha,1}| \big\|\sigma^{-2-2d}Z_{\alpha,1}\big\|_Y
 +|c_{\alpha,2}|\big\|\sigma^{-2-2d}Z_{\alpha,2}\big\|_Y \le c\|h\|_Y, \]
which finishes the proof.
\end{proof}

\subsection{Linearized operators}

We define the operator $\mathcal{L}_1:H^2(\mathbb{R}^2)\to L^2(\mathbb{R}^2)$ by
\[ \mathcal{L}_1u =\Delta u +f'(U)u, \]
where $f$ is defined in \eqref{eq:f}, and $U$ is a solution of \eqref{eq:CSH-top}.

We also define the operator $\mathcal{L}_{2,\alpha}:X\to Y$ by
\[ \mathcal{L}_{2,\alpha}v = \Delta v +\frac14 (4-ab)(2+b)e^{W_\alpha} v. \]
Recall that $\alpha=0$ if $\lambda\notin\mathbb{N}$.

In the following lemma, we recall the kernel of $\mathcal{L}_{2,\alpha}$.

\begin{lemma}
  \label{lem:L2-kernel}
If $\lambda\in\mathbb{N}$ then $\mbox{ker}\mathcal{L}_{2,\alpha} =\mbox{span}
\{ Z_{\alpha,0}, Z_{\alpha,1}, Z_{\alpha,2} \}$.
If $\lambda \notin\mathbb{N}$ then $\mbox{ker}\mathcal{L}_{2,0} =\mbox{span}\{ Z_{0,0}\}$.
\end{lemma}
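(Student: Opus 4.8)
The plan is to reduce the kernel computation for $\mathcal{L}_{2,\alpha}$ to a classical result about the linearization of the Liouville equation, and then carefully track which elements of that kernel actually lie in the space $X$. First I would observe that $W_\alpha = W_{0,\alpha}$ solves
\[
\Delta W_\alpha + \tfrac14(4-ab)(2+b)e^{W_\alpha} = 2\pi(bN_1+2N_2)\delta_{\bf 0},
\]
so that away from the origin $\mathcal{L}_{2,\alpha}$ is exactly the linearized Liouville operator at the solution $W_\alpha$. Writing $z^\lambda + \alpha = \zeta$ and noting that $W_\alpha$ is, up to the harmonic correction $(2\lambda-2)\ln|z|$ coming from the Dirac mass, a pullback under $z \mapsto \zeta$ of the standard bubble $U_0(\zeta) = \ln \frac{8}{(1+|\zeta|^2)^2}$ (after absorbing the constant $\tfrac14(4-ab)(2+b)$ by a translation in the log), I would invoke the classical classification (Chen--Li, Baraket--Pacard, or here the explicit family of Prajapat--Tarantello \eqref{eq:W}) that the bounded kernel of the linearized Liouville operator is three-dimensional, spanned by $\partial_\mu W_{\mu,\alpha}|_{\mu=0}$ and the two components of $\partial_\alpha$-derivatives. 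These are precisely $Z_{\alpha,0}, Z_{\alpha,1}, Z_{\alpha,2}$ by the definitions given just above the lemma, and differentiating the equation $\Delta W_{\mu,\alpha} + \tfrac14(4-ab)(2+b)e^{W_{\mu,\alpha}} = 2\pi(bN_1+2N_2)\delta_{\bf 0}$ in the parameters shows directly that each $Z_{\alpha,j} \in \mathrm{ker}\,\mathcal{L}_{2,\alpha}$.

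The substantive point — and the place where the dichotomy $\lambda \in \mathbb{N}$ versus $\lambda \notin \mathbb{N}$ enters — is deciding which of these kernel elements belong to $X$, i.e. satisfy $\|\sigma^{1+d}\Delta v\|_{L^2} + \|\sigma^{-1-d}v\|_{L^2} < \infty$, and whether there could be additional kernel elements coming from the singularity at ${\bf 0}$ that are not bounded but still lie in $X$. For the growth at infinity: $Z_{\alpha,0}$ is bounded, while $Z_{\alpha,1}, Z_{\alpha,2}$ behave like $|z|^{\lambda}/(1+|z|^{2\lambda}) \sim |z|^{-\lambda}$ as $|z|\to\infty$, so all three decay and the $\sigma^{-1-d}$-weighted $L^2$ norm is finite; one checks similarly that $\sigma^{1+d}\Delta Z_{\alpha,j} = -\tfrac14(4-ab)(2+b)\sigma^{1+d}e^{W_\alpha}Z_{\alpha,j} \in L^2$ since $e^{W_\alpha} \sim |z|^{-4}$ at infinity (using $\lambda \ge 1$). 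For the behavior near ${\bf 0}$: here $Z_{\alpha,0}$ and $Z_{\alpha,2}$ are smooth at the origin, but $Z_{\alpha,1}, Z_{\alpha,2}$ in the case $\alpha = 0$ (forced when $\lambda \notin \mathbb{N}$) contain the factor $\mathrm{Re}(z^\lambda)$ or $\mathrm{Im}(z^\lambda)$, which for non-integer $\lambda$ is not single-valued — it does not even define a function on $\mathbb{R}^2\setminus\{{\bf 0}\}$, let alone an element of $H^2_{loc}$. This is exactly why only $Z_{0,0}$ survives when $\lambda \notin \mathbb{N}$. I would make this precise by showing that any $v \in X$ with $\mathcal{L}_{2,\alpha}v = 0$ in $\mathbb{R}^2\setminus\{{\bf 0}\}$ extends across ${\bf 0}$ (the $H^2_{loc}$ condition plus ellipticity rules out a genuine singularity, since point masses are not in $H^{-2}_{loc}$ — or more concretely, a radial-mode analysis shows the only homogeneous solutions near ${\bf 0}$ compatible with $v \in H^2_{loc}$ are the regular ones), hence $v$ is a bounded (or at worst mildly growing) global kernel element and must be a combination of the $Z_{\alpha,j}$ that are themselves in $X$.

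The main obstacle I anticipate is the rigorous separation-of-variables / Fourier-mode argument near the origin that shows $X$ (defined only by weighted $L^2$ bounds on $v$ and $\Delta v$) does not admit any ``hidden'' kernel element with a singular or logarithmically growing local behavior — in particular ruling out a solution that behaves like a non-integer power $|z|^{\lambda}$ times an angular eigenfunction when $\lambda \notin \mathbb{N}$, and ruling out the radial solution of $\mathcal{L}_{2,\alpha}v=0$ that grows logarithmically at infinity (which is the reason the kernel is not larger). Concretely, one expands $v$ in angular harmonics $e^{ik\theta}$, solves the resulting ODE in $r$ for each $k$, and checks that for $k \ne \pm\lambda, 0$ (or $k \ne \pm 1$ after the $z\mapsto z^\lambda$ change of variables) the only $X$-admissible solution is zero, using the explicit indicial exponents of the Liouville-type ODE. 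Once this mode-by-mode bookkeeping is in place, matching against the known explicit family \eqref{eq:W} of Prajapat--Tarantello closes the argument, and the statement of the lemma follows.
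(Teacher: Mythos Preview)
Your plan is correct in substance and is in fact a sketch of what the cited references do, whereas the paper itself gives essentially no proof at all: it simply refers to \cite{DEM} for the case $\lambda\in\mathbb{N}$ and to \cite[Lemma~2.1]{CL} for $\lambda\notin\mathbb{N}$, adding only the observation (from \cite{M}) that any $u\in X$ satisfies $u(x)=c_u\ln(1+|x|)+O(1)$ at infinity, so that the kernel classification in those references---which is stated for solutions of at most logarithmic growth---applies verbatim to kernel elements in $X$. So your route is not different from the paper's; it is the content that the paper outsources.

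Two small corrections to your sketch. First, there is a slip where you write that ``$Z_{\alpha,0}$ and $Z_{\alpha,2}$ are smooth at the origin, but $Z_{\alpha,1},Z_{\alpha,2}$\ldots''; you presumably mean only $Z_{\alpha,0}$ is always well-defined, while both $Z_{0,1}$ and $Z_{0,2}$ involve $\mathrm{Re}(z^\lambda),\mathrm{Im}(z^\lambda)$ and fail to be single-valued when $\lambda\notin\mathbb{N}$. Second, your statement that one must ``rule out the radial solution of $\mathcal{L}_{2,\alpha}v=0$ that grows logarithmically at infinity'' slightly misidentifies the obstruction: logarithmic growth at infinity is \emph{permitted} in $X$ (indeed the paper's McOwen citation says exactly that $X$-elements may grow like $c\ln(1+|x|)$). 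The second radial solution is excluded because it behaves like $\log r$ \emph{at the origin}, so $|\nabla v|\sim r^{-1}$ there and $v\notin H^2_{loc}\supset X$. Once you phrase the exclusion this way, your mode-by-mode argument goes through exactly as you describe, and the lemma follows.
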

\begin{proof}
See \cite{DEM} and \cite{CL}(Lemma 2.1) for the cases $\lambda\in\mathbb{N}$ and
$\lambda\notin\mathbb{N}$, respectively. Actually, if $u\in X$ then $u(x)= c_u\ln (1+|x|) +O(1)$ as
$|x|\to\infty$ for some constant $c_u\in\mathbb{R}$ (\cite{M}).
Hence the arguments in \cite{DEM,CL} are still valid here.
\end{proof}

For $|\alpha|\le \e_0$, we define the map
$\mathbb{L}_\alpha:H^2(\mathbb{R}^2)\times E_\alpha \to L^2(\mathbb{R}^2)\times F_\alpha$ by
\begin{equation}
  \label{eq:operator-L}
 \mathbb{L}_\alpha(u,v) =\big(\mathcal{L}_1u, ~\mathcal{L}_{2,\alpha}v \big).
\end{equation}
We recall the following result.
\begin{theorem}\cite{Ch,KLLY2}
  \label{thm:L-isomorphism}
Assume that $U$ is a non-degenerate topological solution of \eqref{eq:CSH-top}.
There exists a constant $\overline{\e}_1>0$ such that if $|\alpha|<\overline{\e}_1$ then
$\mathbb{L}_\alpha$ is an isomorphism from $H^2(\mathbb{R}^2)\times E_\alpha$ onto
$L^2(\mathbb{R}^2)\times F_\alpha$. Moreover, there exists a constant $C=C(p_j,q_k)>0$ such that
\begin{align*}&\|u\|_{H^2(\mathbb{R}^2)}\le C\|\mathcal{L}_1u\|_{L^2(\mathbb{R}^2)}\quad\mbox{for all }~ u\in H^2(\mathbb{R}^2),
\\&\|v\|_X \le C\|\mathcal{L}_{2,\alpha}v\|_Y \quad\mbox{for all }~ v\in E_\alpha. \end{align*}
\end{theorem}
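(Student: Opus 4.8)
The plan is to use the block-diagonal form $\mathbb{L}_\alpha(u,v)=(\mathcal{L}_1 u,\mathcal{L}_{2,\alpha}v)$: it suffices to show separately that $\mathcal{L}_1\colon H^2(\mathbb{R}^2)\to L^2(\mathbb{R}^2)$ and $\mathcal{L}_{2,\alpha}\colon E_\alpha\to F_\alpha$ are isomorphisms, and then read off the two displayed estimates from the bounded inverse theorem. The first statement is exactly the non-degeneracy hypothesis on $U$ and is independent of $\alpha$: by assumption $\mathcal{L}_1$ is a continuous bijection with continuous inverse, which is the same as the bound $\|u\|_{H^2(\mathbb{R}^2)}\le C\|\mathcal{L}_1 u\|_{L^2(\mathbb{R}^2)}$. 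Hence all the real work, and all the $\alpha$-dependence, sits in the second factor.

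For the second factor I would first settle $\alpha=0$ by Fredholm theory and then perturb. The key preliminary is that $\Delta\colon X\to Y$ is Fredholm of index $1$. Its kernel is the set of harmonic functions $v$ with $\sigma^{-1-d}v\in L^2(\mathbb{R}^2)$; by the growth dichotomy $v=c_v\ln(1+|x|)+O(1)$ recalled in Lemma \ref{lem:L2-kernel} (or directly by Liouville's theorem) this forces $v$ to be constant, so $\dim\ker\Delta=1$. It is onto because every $h\in Y$ lies in $L^1(\mathbb{R}^2)$ — pair $\sigma^{1+d}h$ with $\sigma^{-1-d}\in L^2$ — so the Newtonian potential $v(x)=\frac{1}{2\pi}\int_{\mathbb{R}^2}\ln|x-y|\,h(y)\,dy$ solves $\Delta v=h$ with at most logarithmic growth and therefore lies in $X$. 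Since the fast decay of $e^{W_\alpha}$ at infinity together with the compact Sobolev embedding $H^2\hookrightarrow C^0$ on balls makes $v\mapsto e^{W_\alpha}v$ a compact map $X\to Y$, the operator $\mathcal{L}_{2,\alpha}=\Delta+\frac14(4-ab)(2+b)e^{W_\alpha}$ is Fredholm of index $1$ from $X$ to $Y$. By Lemma \ref{lem:L2-kernel} its kernel is $3$-dimensional when $\lambda\in\mathbb{N}$ and $1$-dimensional when $\lambda\notin\mathbb{N}$, so its range has codimension $2$, respectively $0$.

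Next I would identify the range with $F_\alpha$ and pass to the restricted operator. Multiplying $\mathcal{L}_{2,\alpha}v=h$ by the rapidly decaying kernel elements $Z_{\alpha,1},Z_{\alpha,2}$ and integrating by parts — the boundary terms vanish since $Z_{\alpha,i}$ decays like $|x|^{-\lambda}$ while $v$ grows at most logarithmically — shows $\mathrm{Range}(\mathcal{L}_{2,\alpha})\subseteq F_\alpha$; as both are closed of codimension $2$, they coincide (when $\lambda\notin\mathbb{N}$, $F_\alpha=Y$ and surjectivity is automatic). The crucial point is that the third kernel element $Z_{\alpha,0}$ tends to $-1$ at infinity, so the same computation produces a nonzero boundary term proportional to the logarithmic growth rate $c_v$ of $v$: this imposes \emph{no} solvability condition, it merely fixes $c_v$, which is exactly why $F_\alpha$ carries only two constraints. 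On $E_\alpha$ injectivity holds because a kernel element lying in $E_\alpha$ is a combination $\sum_i t_i Z_{\alpha,i}$ that is $L^2$-orthogonal to every $e^{W_\alpha}Z_{\alpha,i}$; at $\alpha=0$ the Gram matrix $\big((Z_{0,i},e^{W_0}Z_{0,j})_{L^2}\big)$ is diagonal with strictly positive entries by parity, hence invertible, and stays invertible for small $|\alpha|$. A dimension count now gives index $1-3+2=0$ (resp.\ $1-1+0=0$), so the injective restricted operator $\mathcal{L}_{2,\alpha}\colon E_\alpha\to F_\alpha$ is an isomorphism, and the bounded inverse theorem yields $\|v\|_X\le C\|\mathcal{L}_{2,\alpha}v\|_Y$ on $E_\alpha$.

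Finally, uniformity in $\alpha$ comes from perturbation. The maps $\alpha\mapsto Z_{\alpha,i}$ and $\alpha\mapsto e^{W_\alpha}$ are continuous into the relevant spaces, so using near-identity isomorphisms $E_0\to E_\alpha$ and $F_\alpha\to F_0$ — built from the projection $T_\alpha$ of Lemma \ref{lem:T-bounded} and the analogous projection onto $E_\alpha$ — one conjugates $\mathcal{L}_{2,\alpha}$ to an operator $E_0\to F_0$ that converges in operator norm to the isomorphism $\mathcal{L}_{2,0}$ as $\alpha\to0$; since isomorphisms form an open set, the conjugated operators remain isomorphisms with uniformly bounded inverses for $|\alpha|<\overline{\e}_1$, and the estimates transfer back. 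I expect the main obstacle to be the correct identification of the codimension-$2$ range $F_\alpha$ — i.e.\ recognizing that the non-decaying dilation mode $Z_{\alpha,0}$ is ``resonant at infinity'' and yields no solvability condition because the space $X$ tolerates logarithmic growth — together with keeping all constants uniform while the domain and target $E_\alpha,F_\alpha$ themselves move with $\alpha$.
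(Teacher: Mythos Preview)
The paper does not prove this theorem: it is stated with the citation \cite{Ch,KLLY2} in the theorem heading and then used as a black box, so there is no in-paper argument to compare against. Your Fredholm-plus-perturbation outline is the standard route for such linearized Liouville operators on weighted spaces and is correct as sketched; in particular your identification of why only $Z_{\alpha,1},Z_{\alpha,2}$ (and not the non-decaying $Z_{\alpha,0}$) generate solvability constraints, together with the index count $1-3+2=0$ (resp.\ $1-1+0=0$), is exactly the mechanism that makes $\mathcal{L}_{2,\alpha}\colon E_\alpha\to F_\alpha$ an isomorphism.
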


%%%%%%%%%%%%%%%%%%%%%%%%%%%%%%%

\setcounter{equation}{0}
\section{Existence of Solutions} In this section, we are going to prove Theorem \ref{T121}.
For a technical reason, we divide the proof of Theorem \ref{T121}  into two cases
$\displaystyle \lambda =\frac{bN_1}{2} +N_2 +1\ge \frac32$ and $\lambda=1$ since $N_1, N_2\in\mathbb{N}\cup\{0\}$.

\subsection{The case $\lambda\ge 3/2$}
We introduce some functions to simplify notations. Let
\begin{equation}
  \label{eq:PeQe}
 P_\e(x) =\prod_{j=1}^{N_1} |x-\e p_j| \quad\mbox{and}\quad Q_\e(x)=\prod_{k=1}^{N_2} |x-\e q_k|
 \quad\mbox{for }~ x=(x_1,x_2)\in\mathbb{R}^2.
\end{equation}
We let $P_\e\equiv 1$ if $N_1=0$. We also let
\begin{equation}
  \label{eq:varphi-e}
 \varphi_\e(z) =-\frac12 ab e^{W_\alpha(\e z)}\chi(z),
 ~\quad z=x_1+{\rm i}x_2\in\mathbb{C},
\end{equation}
where $\chi$ is a smooth cut-off function such that $0\le \chi\le 1$ in $\mathbb{R}^2$, and
\[ \chi(x) =\left\{ \begin{array}{ll} 0, &~ |x|\le 1/2, \\ 1, &~ |x|\ge 1. \end{array} \right. \]

We now introduce an approximate solution to \eqref{eq:main}.
For $\e>0$ and $\alpha\in\mathbb{C}$, we define a pair of functions
$(V_{1,\e}, V_{2,\e,\alpha})$ by
\begin{equation}
  \label{eq:V-approximation}
 \left\{ \begin{aligned}
 V_{1,\e}(x) &= U(x) -\ln 2 +\e^2\varphi_\e(x), \\
 V_{2,\e,\alpha}(x) &= W_\alpha^*(\e x) +b\ln P_\e(\e x) +2\ln Q_\e(\e x)
 -\frac{b}{2} U(x) -\frac{b}{2}\e^2\varphi_\e(x),
 \end{aligned} \right.
\end{equation}
where $U$ is a non-degenerate topological solution of \eqref{eq:CSH-top}.
We use $(V_{1,\e}, V_{2,\e,\alpha}+2\ln\e)$ as an approximate solution to \eqref{eq:main}.
As we mentioned before, $\varphi_\varepsilon$ is added to $V_{1,\varepsilon}$ to cover
the mass contribution of $ae^{u_2} +a(b-2)e^{u_1+u_2}$ in the first equation.

If $\e>0$ is sufficiently small, we will find a solution $(u_1,u_2)$ of \eqref{eq:main} of the form
\begin{equation}
  \label{eq:u-form}
 \left\{ \begin{aligned}
  u_1(x) &= V_{1,\e}(x) +\e^2 \xi_{\e,\alpha}(x), \\
  u_2(x) &= V_{2,\e,\alpha}(x) +2\ln\e -\frac{b}{2}\e^2\xi_{\e,\alpha}(x) +\e^2\eta_{\e,\alpha}(\e x),
 \end{aligned} \right.
\end{equation}
for some $\alpha=\alpha(\e) \in\mathbb{C}$. Here $\e^2 \xi_{\e,\alpha}(x)$ and $\e^2\eta_{\e,\alpha}(\e x)$ are error terms. It will turn out that $|\alpha(\e)|=o(1)$,
$\|\xi_{\e,\alpha(\e)}\|_{H^2(\RN)}=o(1)$ and $\|\eta_{\e,\alpha(\e)}\|_X=O(1)$ as $\e\to 0$. We note that there is a constant $c_0>0$ satisfying
\begin{equation}
  \label{eq:xi-eta-Linfty0}
 \begin{aligned} |\xi(x)| &\le c_0\|\xi\|_{H^2(\RN)}  \quad\mbox{for all}
  \ \ \xi\in H^2(\RN), \ \mbox{and} \\
 |\eta(x)| &\le c_0\|\eta\|_{X}\big( 1+\ln\sigma(x)\big) \quad\mbox{for all }~ x\in\RN, \ \eta\in X,  \end{aligned}
\end{equation}here we used $W^{2,2}$ estimation and  \cite[Theorem 4.1]{CFL} respectively.
Together with $V_{2,\e,\alpha}(x) =-(bN_1 +2N_2 +4)\ln |x|+O(1)$ as $|x|\to\infty$,
we will obtain the limit of $\beta_\e$ as in Theorem \ref{T121}.

We rewrite the system \eqref{eq:main-u1}-\eqref{eq:main-u2} as
\begin{align}
 \mathcal{L}_1{\xi}_{\e,\alpha} &= g_{1,\e,\alpha}(\xi_{\e,\alpha},\eta_{\e,\alpha}),
  \label{eq:eq-L1} \\
 \mathcal{L}_{2,\alpha} \eta_{\e,\alpha} &= g_{2,\e,\alpha}(\xi_{\e,\alpha}, \eta_{\e,\alpha}),
  \label{eq:eq-L2}
\end{align}
where $g_{1,\e,\alpha}$ and $g_{2,\e,\alpha}$ are defined by
\begin{equation}
  \label{eq:g-1e}
\begin{aligned}
 g_{1,\e,\alpha}(\xi,\eta)(x)
&= -\frac{1}{\e^2}\Big( f(U+\e^2\varphi_\e +\e^2\xi) -f(U) -\e^2f'(U)(\varphi_\e +\xi)\Big)(x) \\
&\quad -\Delta\varphi_\e(x) -f'(U(x))\varphi_\e(x) \\
&\quad +a \exp\Big(\Big(V_{2,\e,\alpha} -\frac{b}{2}\e^2\xi\Big)(x) +\e^2\eta(\e x)\Big) \\
&\quad +a(b-2)\exp\Big( \Big(V_{1,\e}+V_{2,\e,\alpha} +\frac{2-b}{2}\e^2\xi\Big)(x)
 +\e^2\eta(\e x)\Big) \\
&\quad -2a\e^2 \exp\big(\big(2V_{2,\e,\alpha} -b\e^2\xi\big)(x) +2\e^2\eta(\e x)\big)
\end{aligned}
\end{equation}
and
\begin{equation}
  \label{eq:g-2e}
\begin{aligned}
 g_{2,\e,\alpha}(\xi,\eta)(x)
&= -\frac{4-ab}{2\e^2} \exp\Big( \Big(V_{2,\e,\alpha} -\frac{b}{2}\e^2\xi\Big) (x/\e) +\e^2\eta(x)\Big) \\
&\quad -\frac{b(4-ab)}{2\e^2} \exp\Big( \Big(V_{1,\e} +V_{2,\e,\alpha} +\frac{2-b}{2}\e^2\xi\Big)
 (x/\e) +\e^2\eta(x)\Big) \\
&\quad +\frac{1}{4\e^2}(4-ab)(2+b)e^{W_\alpha(x)} \big(1+\e^2\eta(x) \big) \\
&\quad +(4-ab)\exp\big( \big(2V_{2,\e,\alpha} -b\e^2\xi\big)(x/\e) +2\e^2\eta(x) \big).
\end{aligned}
\end{equation}

By a shift of origin, without loss of generality, throughout this paper, we always assume that
\begin{equation}
  \label{eq:PQ-location}
 \sum_{j=1}^{N_1} bp_j +\sum_{k=1}^{N_2} 2q_k ={\bf 0}.
\end{equation}

We define
\[ S_0=\{(\xi,\eta)\in H^2(\RN)\times E_\alpha\mid \|\xi\|_{H^2(\RN)} +\|\eta\|_X\le M_0\}, \]
where $M_0\ge 1$ is a constant to be determined later.
Recall the map $\mathbb{L}_\alpha$ defined in \eqref{eq:operator-L}.

\begin{proposition}
  \label{prop:contraction}
Let $U$ be  a non-degenerate topological solution of \eqref{eq:CSH-top}.
%When $\lambda=2$, we assume in addition that $U$ satisfies the symmetry condition
%\begin{equation}
%  \label{eq:U-symmetry}
% U(x_1,x_2)=U(-x_1,x_2)=U(x_1,-x_2) \quad\mbox{for all }~ (x_1,x_2)\in\mathbb{R}^2.
%\end{equation}
There exist constants $M_0\ge 1$ and $\overline{\e}_2>0$ satisfying the following property:
if $0<\e<\overline{\e}_2$ and $|\alpha|<\overline{\e}_2$ then there exists a unique element
$(\xi_{\e,\alpha},\eta_{\e,\alpha})\in S_0$ such that
\begin{equation}
  \label{eq:L-g-solution}
 \mathbb{L}_\alpha(\xi_{\e,\alpha},\eta_{\e,\alpha})
 =\big(g_{1,\e,\alpha}(\xi_{\e,\alpha},\eta_{\e,\alpha}),~
 T_\alpha g_{2,\e,\alpha}(\xi_{\e,\alpha},\eta_{\e,\alpha})\big).
\end{equation}
\end{proposition}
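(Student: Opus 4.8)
The plan is to solve the system \eqref{eq:L-g-solution} by a contraction mapping argument in the complete metric space $S_0$. Since Theorem \ref{thm:L-isomorphism} gives that $\mathbb{L}_\alpha$ is an isomorphism from $H^2(\RN)\times E_\alpha$ onto $L^2(\RN)\times F_\alpha$ with a uniform bound on the inverse for $|\alpha|<\overline{\e}_1$, equation \eqref{eq:L-g-solution} is equivalent to the fixed point problem
\[
 (\xi,\eta) = \mathbb{L}_\alpha^{-1}\Big( g_{1,\e,\alpha}(\xi,\eta),\ T_\alpha g_{2,\e,\alpha}(\xi,\eta)\Big) =: \mathcal{A}_{\e,\alpha}(\xi,\eta).
\]
One first checks that $\mathcal{A}_{\e,\alpha}$ is well defined, i.e. that $g_{1,\e,\alpha}(\xi,\eta)\in L^2(\RN)$ and $T_\alpha g_{2,\e,\alpha}(\xi,\eta)\in F_\alpha$ for $(\xi,\eta)\in S_0$; the latter membership is automatic from the definition of $T_\alpha$ and Lemma \ref{lem:T-bounded}, while the former requires the weighted $L^2$ estimates on the nonlinear terms described below.

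The two quantitative inputs needed are a smallness estimate and a Lipschitz estimate. For the smallness estimate I would show that there is a constant $C_1>0$, independent of small $\e$ and $\alpha$, such that
\[
 \|g_{1,\e,\alpha}(0,0)\|_{L^2(\RN)} + \|g_{2,\e,\alpha}(0,0)\|_{Y} \le C_1.
\]
Here the key cancellations are exactly those anticipated in the introduction: in $g_{1,\e,\alpha}$ the term $-\e^{-2}\big(f(U+\e^2\varphi_\e)-f(U)-\e^2 f'(U)\varphi_\e\big)$ is $O(\e^2|\varphi_\e|^2)$ by Taylor expansion and is absorbed, while $-\Delta\varphi_\e - f'(U)\varphi_\e = -\Delta\varphi_\e + \varphi_\e + (e^U(1-2e^U)-1)\varphi_\e$ is designed, via the choice $\varphi_\e = -\tfrac12 ab\, e^{W_\alpha(\e\cdot)}\chi$ and the Liouville equation \eqref{eq:W-equation}, to cancel the leading part of $a\exp(V_{2,\e,\alpha}+\cdots)$; one is left with lower-order terms which are $O(\e^2)$ or exponentially small in the appropriate norms, using the exponential decay $|U|+|\nabla U|\le C_0 e^{-|x|}$ and the polynomial decay of $e^{W_\alpha}$. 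In $g_{2,\e,\alpha}$ the combination of the first, third, and fourth lines reproduces, after rescaling $x\mapsto x/\e$ and using $\hat u_1\to -\ln 2$, the quantity $\tfrac14(ab-4)(2+b)e^{W_\alpha}$ that also cancels against the third line up to errors controlled in $\|\cdot\|_Y$; one must check these errors decay like $\sigma^{-1-d}$ times an $L^2$ function, which is where the weight exponent $d<1/4$ and the precise form of $V_{2,\e,\alpha}$ enter. For the Lipschitz estimate I would show that for $(\xi,\eta),(\xi',\eta')\in S_0$,
\[
 \|g_{j,\e,\alpha}(\xi,\eta)-g_{j,\e,\alpha}(\xi',\eta')\|_{(\cdot)} \le C_2\big(\e^2 + o(1)\big)\big(\|\xi-\xi'\|_{H^2(\RN)}+\|\eta-\eta'\|_X\big),
\]
which follows by writing differences of exponentials as integrals of their derivatives, using \eqref{eq:xi-eta-Linfty0} to control $\e^2\xi$ and $\e^2\eta(\e x)$ pointwise (note the logarithmic factor $1+\ln\sigma(\e x) = O(1+\ln(1/\e)+\ln\sigma(x))$ is beaten by the $\e^2$ prefactor), and the boundedness of $T_\alpha$ on $Y$ from Lemma \ref{lem:T-bounded}.

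Combining these with $\|\mathbb{L}_\alpha^{-1}\|\le C$ uniform, one fixes $M_0 := 2C(C_1 + 1)$ and then chooses $\overline{\e}_2 \le \overline{\e}_1$ so small that for $0<\e<\overline{\e}_2$, $|\alpha|<\overline{\e}_2$ the map $\mathcal{A}_{\e,\alpha}$ sends $S_0$ into itself (from $\|\mathcal{A}_{\e,\alpha}(0,0)\|\le C(C_1+1)\le M_0/2$ and the Lipschitz bound) and is a contraction there (Lipschitz constant $CC_2(\e^2+o(1))<1/2$). The Banach fixed point theorem then yields the unique $(\xi_{\e,\alpha},\eta_{\e,\alpha})\in S_0$ solving \eqref{eq:L-g-solution}. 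I expect the main obstacle to be the bookkeeping in the smallness estimate for $g_{2,\e,\alpha}(0,0)$ in the $\|\cdot\|_Y$ norm: one has to track how the corrections $b\ln P_\e(\e x)+2\ln Q_\e(\e x)$ and $-\tfrac b2 U(x)-\tfrac b2\e^2\varphi_\e(x)$ in $V_{2,\e,\alpha}$, together with the normalization \eqref{eq:PQ-location}, conspire so that $\exp(V_{2,\e,\alpha}(x/\e)+2\ln\e)$ matches $e^{W_\alpha(x)}$ up to an error that is genuinely integrable against $\sigma^{2+2d}$ near infinity — the near-infinity behavior is delicate precisely because $e^{W_\alpha}$ only decays polynomially and the weight $\sigma^{1+d}$ pushes against it, so the $O(\e^2)$ or $O(|\alpha|)$ gain must be extracted carefully rather than lost.
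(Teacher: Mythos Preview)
Your proposal is correct and follows the same contraction-mapping strategy as the paper: invert $\mathbb{L}_\alpha$ via Theorem~\ref{thm:L-isomorphism}, establish that $g_{1,\e,\alpha}$ and $g_{2,\e,\alpha}$ are bounded and Lipschitz with small constant on $S_0$, and apply Banach's fixed point theorem. Two minor descriptive slips aside---in $g_{2,\e,\alpha}$ it is the first \emph{three} lines whose leading parts cancel (the fourth, involving $e^{2V_{2,\e,\alpha}}$, is separately $O(1)$ in $Y$), and the Lipschitz constant and $\|g_1\|_{L^2}$ come out as $O(\e)$ rather than $O(\e^2)$---the mechanism of cancellation via $\varphi_\e$ in $g_1$ and the matching $e^{V_{2,\e,\alpha}(x/\e)}\approx e^{W_\alpha(x)}$ in $g_2$ is exactly as you outline, and the paper carries it out by the explicit decompositions $g_1=I_1+\cdots+I_5$ and $(4-ab)^{-1}g_2=J_1+J_2+J_3$.
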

\begin{proof}
The proof is based on the contraction mapping theorem.
By \eqref{eq:xi-eta-Linfty0}, we have  if $(\xi,\eta)\in S_0$ then
\begin{equation}
  \label{eq:xi-eta-Linfty}
 \begin{aligned} |\xi(x)| &\le   c_0M_0 \ \ \mbox{and}\ \
 |\eta(x)| \le c_0M_0\big( 1+\ln\sigma(x)\big) \quad\mbox{for all }~ x\in\RN.  \end{aligned} \end{equation}
In this proof, we will denote by $C$ and $C_i$ various constants
independent of $\e$, $\alpha$ and $(\xi,\eta)\in S_0$. We let
\begin{equation}
  \label{eq:R0}
 R_0= 1 +5\max_{j,k}\{|p_j|, |q_k|\}.
\end{equation}
For $|x|\ge R_0\e$, we define $H_\e(x)$ by
\begin{align}
 H_\e(x) &= b\ln P_\e(x) +2\ln Q_\e(x) -(bN_1+2N_2)\ln |x|  \label{eq:He} \\
&= \sum_{j=1}^{N_1}\frac{b}{2} \ln \Big(1-\frac{2\e p_j\cdot x}{|x|^2} +\frac{\e^2|p_j|^2}{|x|^2} \Big)
 +\sum_{k=1}^{N_2} \ln \Big(1-\frac{2\e q_k\cdot x}{|x|^2} +\frac{\e^2|q_k|^2}{|x|^2} \Big). \nonumber
\end{align}
Then $|H_\e(x)|\le C\e^2/|x|^2$ for $|x|\ge R_0\e$. See \eqref{eq:A}-\eqref{eq:He-A} below.

We claim that there exist constants ${\e}'={\e}'(M_0)>0$ and $C_1=C_1(p_j,q_k)>0$
such that if $0<\e<{\e}'$ and $|\alpha|\le 1$ then
\begin{equation}
  \label{eq:g1e-defined}
 \|g_{1,\e,\alpha}(\xi,\eta)\|_{\LN} \le C_1M_0e^{C_1M_0\e^2} \e
 \quad\mbox{for all }~ (\xi,\eta)\in S_0.
\end{equation}
%where we set
%\begin{equation}
%  \label{eq:gamma-e}
% \gamma(\e) =\left\{ \begin{array}{ll} \e, &~ \mbox{if }~ \lambda\ge 2, \\
%    \e(1+|\ln\e|)^{1/2}, &~ \mbox{if }~ \lambda =3/2. \end{array} \right.
%\end{equation}
To prove \eqref{eq:g1e-defined}, we write
\[ g_{1,\e,\alpha}(\xi,\eta)(x) =I_1 +I_2 +I_3 +I_4 +I_5, \]
where
\begin{align*}
 I_1 &= -\frac{1}{\e^2}\Big( f(U+\e^2\varphi_\e +\e^2\xi) -f(U) -\e^2f'(U)(\varphi_\e +\xi)\Big)(x), \\
 I_2 &= -\Delta\varphi_\e(x) -\big(f'(U(x)) +1\big)\varphi_\e(x), \\
 I_3 &= ae^{W_\alpha^*(\e x) +b\ln P_\e(\e x) +2\ln Q_\e(\e x)
 -\frac{b}{2}(U+\e^2\varphi_\e +\e^2\xi)(x) +\e^2\eta(\e x)} \big( 1-e^{(U+\e^2\varphi_\e +\e^2\xi)(x)}\big), \\
 I_4 &= \frac12 ab \big[ e^{W_\alpha^*(\e x) +b\ln P_\e(\e x) +2\ln Q_\e(\e x)
 +\frac{2-b}{2}(U+\e^2\varphi_\e +\e^2\xi)(x) +\e^2\eta(\e x)} -e^{W_\alpha(\e x)}\chi(x) \Big], \\
 I_5 &= -2a\e^2 e^{2W_\alpha^*(\e x) +2b\ln P_\e(\e x) +4\ln Q_\e(\e x)
 -b(U+\e^2\varphi_\e +\e^2\xi)(x) +2\e^2\eta(\e x)}.
\end{align*}
Note that
\[ I_1 = -\frac{1}{\e^2}e^U\big(e^{\e^2(\varphi_\e +\xi)} -1-\e^2(\varphi_\e +\xi)\big)
 +\frac{1}{\e^2} e^{2U}\big(e^{2\e^2(\varphi_\e +\xi)} -1-2\e^2(\varphi_\e +\xi)\big). \]
By \eqref{eq:xi-eta-Linfty} and the inequality $|e^t-1-t|\le (1/2)e^{|t|}|t|^2$, we obtain
\[ \|I_1\|_{\LN} \le C e^{CM_0\e^2}(\e +M_0^2\e^2). \]

Note that
\[ \big|f'(U(x))+1\big| =|e^{U(x)}-1||2e^{U(x)}+1| \le Ce^{-|x|} \quad\mbox{for }~ |x|\ge 1/2. \]
Since $e^{W_\alpha(\e x)}\le C\e^{2\lambda-2}|x|^{2\lambda-2}$ and $\lambda\ge 3/2$, it follows that
\begin{align*}
 \|I_2\|_{\LN}^2 & \le C\|\Delta\varphi_\e\|_{\LN}^2 +C\int_{|x|\ge 1/2} e^{-2|x|}e^{2W_\alpha(\e x)} dx \\
&\le C\e^2 +C\e^{4\lambda-4} \int_{|x|\ge 1/2} |x|^{4\lambda -4} e^{-2|x|} dx \\
&\le C(\e^2 +\e^{4\lambda-4}) \le C\e^2 \quad\mbox{for }~ 0<\e<1 ~\mbox{ and }~ |\alpha|\le 1.
\end{align*}

We now estimate $\|I_k\|_{\LN}$ ($k=3,4,5$). For this purpose, we assume $|\alpha|\le 1$
and we divide $\RN$ into two regions $\{x\mid |x|\le R_0\}$ and $\{x\mid |x|\ge R_0\}$.

Since $U(x)-2\sum_{i=1}^{N_1}\ln |x-p_i|\in C^{\infty}(B_{R_0}(0))$, we have \begin{equation}\label{smallupp}e^{2\ln P_\e(\e x) -U(x)}\le C\e^{2N_1}\ \ \textrm{ for}\ \  |x|\le R_0.\end{equation} Together with $\lambda\ge 3/2$,
it follows from \eqref{eq:xi-eta-Linfty} that
\[ |I_3|+|I_4|+|I_5| \le Ce^{CM_0\e^2}\e^{2\lambda-2} \le Ce^{CM_0\e^2}\e
 \quad\mbox{for }~ |x|\le R_0, \]
and hence $\|I_k\|_{L^2(|x|\le R_0)}\le Ce^{CM_0\e^2}\e$. ($k=3,4,5$)

For $|x|\ge R_0$, we have
\[ W_\alpha^*(\e x)+b\ln P_\e(\e x)+2\ln Q_\e(\e x) =W_\alpha(\e x)+H_\e(\e x). \]
Then it follows from \eqref{eq:xi-eta-Linfty} that if $|x|\ge R_0$ then
$\e^2|\eta(\e x)| \le c_0M_0\e^2(1+\ln\sigma(\e x))$.
%\begin{align*}
% |I_3| &= ae^{W_\alpha(\e x) +H_\e(\e x) -(b/2)(U+\e^2\varphi_\e+\e^2\xi)(x) +\e^2\eta(\e x)}
% \big|1-e^{(U+\e^2\varphi_\e+\e^2\xi)(x)}\big| \\
%&\le Ce^{CM_0\e^2} (\sigma^{c_0M_0\e^2}e^{W_{\alpha}})(\e x)
% \big(|U(x)|+\e^2|\varphi_\e(x)|+\e^2|\xi(x)|\big) \\
%&\le Ce^{CM_0\e^2} (\sigma^{c_0M_0\e^2}e^{W_{\alpha}})(\e x) (e^{-|x|} +M_0\e^2).
%\end{align*}
Thus if $|x|\ge R_0$ then by the inequality $|e^t-1|\le e^{|t|}|t|$ for $t\in\mathbb{R}$,
\begin{align*}
 |I_3|+ |I_4| &\le Ce^{CM_0\e^2} (\sigma^{c_0M_0\e^2}e^{W_\alpha})(\e x) \Big(|H_\e(\e x)|+|U(x)| \\
&\qquad +\e^2|\varphi_\e(x)| +\e^2|\xi(x)| +\e^2|\eta(\e x)|\Big) \\
&\le Ce^{CM_0\e^2} (\sigma^{c_0M_0\e^2}e^{W_\alpha})(\e x) \Big(\frac{1}{|x|^2} +e^{-|x|} +M_0\e^2
 +M_0\e^2\ln\sigma(\e x) \Big).
\end{align*}
If in addition that $\lambda =3/2$ then $N_2=0$ and $b=N_1=1$. Then we have $p_1={\bf 0}$
by the assumption \eqref{eq:PQ-location}. In this case, it follows that $H_\e=0$ identically, and hence
\[ |I_3|+ |I_4| \le Ce^{CM_0\e^2} (\sigma^{c_0M_0\e^2}e^{W_\alpha})(\e x)
 \Big( e^{-|x|} +M_0\e^2 +M_0\e^2\ln\sigma(\e x) \Big). \]

Choose a constant ${\e}'>0$ such that
\[ c_0M_0(\e')^2\le 1. \]
In particular, $\e'<1$. If $0<\e<\e'$ and $|\alpha|\le 1$ then
\[ (\sigma^{c_0M_0\e^2} e^{W_\alpha})(\e x) \le C\e^{2\lambda-2} |x|^{2\lambda-2} \sigma^{1-4\lambda}(\e x)
 \quad\mbox{for }~ |x|\ge R_0, \]
and consequently $\displaystyle \|(\sigma^{c_0M_0\e^2}e^{W_\alpha})(\e x)e^{-|x|}\|_{L^2(|x|\ge R_0)}
 \le C\e^{2\lambda-2} \le C\e$.

If $\lambda\ge 2$ then
\[ \int_{|x|\ge R_0} (\sigma^{2c_0M_0\e^2}e^{2W_\alpha})(\e x) |x|^{-4} dx
 \le \int_{|y|\ge R_0\e} C\e^2 |y|^{4\lambda-8}\sigma^{2-8\lambda}(y) dy \le C\e^2. \]
Therefore $\|I_3\|_{\LN} +\|I_4\|_{\LN}\le CM_0e^{CM_0\e^2}\e$ for $0<\e<{\e}'$ and $|\alpha|\le 1$.

Clearly
\[ |I_5|\le C\e^2e^{CM_0\e^2} (\sigma^{2c_0M_0\e^2} e^{2W_\alpha})(\e x) \quad\mbox{for }~ |x|\ge R_0, \]
and hence $\|I_5\|_{\LN} \le Ce^{CM_0\e^2}\e$ for $0<\e<{\e}'$ and $|\alpha|\le 1$.
Putting all the estimates for $I_k$ together, we obtain \eqref{eq:g1e-defined}.

We claim that there exists a constant $C_2=C_2(p_j,q_k)>0$ such that if $0<\e<\e'$ and $|\alpha|\le 1$ then
\begin{equation}
  \label{eq:g2e-defined}
 \|g_{2,\e,\alpha}(\xi,\eta)\|_Y \le C_2e^{C_2M_0\e^2}(1+M_0^2\e^2) \quad\mbox{for all }~ (\xi,\eta)\in S_0.
\end{equation}
To prove \eqref{eq:g2e-defined}, we   note that \eqref{smallupp} yields
$e^{2\ln P_\e(x)-U(x/\e)}\le C\e^{2N_1}$ for $|x|\le R_0\e$. Then it follows from \eqref{eq:V-approximation}
that $\exp({V}_{2,\e,\alpha}(x/\e))\le C\e^{bN_1+2N_2}$ for $|x|\le R_0\e$, and consequently
\begin{equation}
  \label{g2bdd}\big| g_{2,\e,\alpha}(\xi,\eta)(x) \big| \le Ce^{CM_0\e^2}\e^{bN_1+2N_2-2}
 =Ce^{CM_0\e^2}\e^{2\lambda -4} \quad\mbox{for }~ |x|\le R_0\e. \end{equation}
This implies that
\[ \big\|g_{2,\e,\alpha}(\xi,\eta) \big\|_{L^2(|x|\le R_0\e)} \le Ce^{CM_0\e^2}\e^{2\lambda-3}. \]

For $|x|\ge R_0\e$, we express $g_{2,\e}$ as
\begin{equation}
  \label{g2e} \frac{1}{4-ab} g_{2,\e,\alpha}(\xi,\eta)(x) =J_1+J_2+J_3, \end{equation}
where
\begin{align*}
 J_1 &= -\frac{1}{2\e^2} e^{W_\alpha(x)} \Big(e^{H_\e(x) -(b/2)(U+\e^2\varphi_\e +\e^2\xi)(x/\e)
 +\e^2\eta(x)} -1-\e^2\eta(x)\Big),  \\
 J_2 &= -\frac{b}{4\e^2} e^{W_\alpha(x)} \Big( e^{H_\e(x) +\frac{2-b}{2}(U+\e^2\varphi_\e
 +\e^2\xi)(x/\e) +\e^2\eta(x)} -1 -\e^2\eta(x) \Big),  \\
 J_3 &= e^{2W_\alpha(x) +2H_\e(x) -b(U+\e^2\varphi_\e+\e^2\xi)(x/\e) +2\e^2\eta(x)}.
\end{align*}
For simplicity, we let
\begin{align}
 R_{1,\e}(x) &= H_\e(x) -\frac{b}{2}\big(U+\e^2\varphi_\e +\e^2\xi\big)(x/\e) +\e^2\eta(x), \label{eq:R1e} \\
 R_{2,\e}(x) &= H_\e(x) +\frac{2-b}{2} \big(U+\e^2\varphi_\e +\e^2\xi \big)(x/\e) +\e^2\eta(x). \label{eq:R2e}
\end{align}
For $|x|\ge R_0\e$ we can rewrite $J_1$ as
\begin{align*}
 J_1 &= -\frac{1}{2\e^2} e^{W_\alpha(x)} \big(e^{R_{1,\e}(x)} -1 -R_{1,\e}(x)\big) \\
&\quad -\frac{1}{2\e^2} e^{W_\alpha(x)} \Big( H_\e(x)
 -\frac{b}{2}\big(U+\e^2\varphi_\e +\e^2\xi\big)(x/\e) \Big).
\end{align*}
Since $|H_\e(x)|\le C\e^2/|x|^2$ for $|x|\ge R_0\e$, it follows from \eqref{eq:xi-eta-Linfty} that
if $\lambda\ge 2$ then
\begin{align*}
 |J_1| &\le \frac{1}{4\e^2} e^{W_\alpha(x)} e^{|R_{1,\e}(x)|}|R_{1,\e}(x)|^2
 +C e^{W_\alpha(x)} \Big( \frac{1}{|x|^2} +\frac{1}{\e^2}\Big|U \Big(\frac{x}{\e}\Big)\Big|
 +\Big|\xi\Big(\frac{x}{\e}\Big)\Big| +1 \Big) \\
&\le Ce^{CM_0\e^2} (\sigma^{c_0M_0\e^2} e^{W_\alpha})(x) \Big( \frac{1}{|x|^2}
 +\frac{1}{\e^2} e^{-|x|/\e} +(1+M_0\e^2) \Big|\xi\Big(\frac{x}{\e}\Big)\Big| + \\
&\qquad +1 +M_0^2\e^2 +M_0^2\e^2(\ln\sigma)^2(x) \Big)  ~\quad\mbox{for }~ |x|\ge R_0\e.
\end{align*}
If $\lambda=3/2$ then $p_1={\bf 0}$ and hence $H_\e\equiv 0$ as before. In this case
\begin{align*}
 |J_1| &\le Ce^{CM_0\e^2} (\sigma^{c_0M_0\e^2} e^{W_\alpha})(x)
 \Big( \frac{1}{\e^2} e^{-|x|/\e} +(1+M_0\e^2) \Big|\xi\Big(\frac{x}{\e}\Big)\Big| + \\
&\qquad +1 +M_0^2\e^2 +M_0^2\e^2(\ln\sigma)^2(x) \Big)  ~\quad\mbox{for }~ |x|\ge R_0\e.
\end{align*}
Recall that $c_0M_0(\e')^2\le 1$. If $0<\e<{\e}'$ and $|\alpha|\le 1$ then
\[ (\sigma^{c_0M_0\e^2} e^{W_\alpha})(x) \le C|x|^{2\lambda-2}\sigma^{1-4\lambda}(x)
 \quad\mbox{for }~ |x|\ge R_0\e. \]
Consequently, if $\lambda\ge 3/2$, $0<\e<{\e}'$ and $|\alpha|\le 1$ then
\begin{align*}
& \int_{|x|\ge R_0\e} (\sigma^{2+2d}\sigma^{2c_0M_0\e^2} e^{2W_\alpha})(x) e^{-2|x|/\e} dx \\
&\quad\le \int_{|x|\ge R_0\e} C|x|^{4\lambda-4} e^{-2|x|/\e} dx
 \le \int_{|y|\ge R_0} C\e^{4\lambda-2} |y|^{4\lambda-4} e^{-2|y|} dy \le C\e^{4\lambda -2}
\end{align*}
and
\[ \int_{|x|\ge R_0\e} \sigma^{2+2d}\sigma^{2c_0M_0\e^2}e^{2W_\alpha} \Big|\xi\Big(\frac{x}{\e}\Big)\Big|^2dx
 \le C\e^2\|\xi\|_{\LN}^2 \le CM_0^2\e^2. \]
If $\lambda\ge 2$ in addition, then
\[ \int_{|x|\ge R_0\e} \frac{1}{|x|^4} \sigma^{2+2d} (\sigma^{2c_0M_0\e^2}e^{2W_\alpha})(x)dx \le C. \]
Therefore if $0<\e<{\e}'$ and $|\alpha|\le 1$,
\begin{equation}
  \label{eq:J1-Ynorm}
 \|J_1\|_Y\le Ce^{CM_0\e^2}(1 +M_0^2\e^2).
\end{equation}

$J_2$ can be expressed as
\begin{align*}
 J_2 &= -\frac{b}{4\e^2} e^{W_\alpha(x)} \big(e^{R_{2,\e}(x)} -1 -R_{2,\e}(x)\big) \\
&\quad -\frac{b}{4\e^2} e^{W_\alpha(x)} \Big( H_\e(x)
 +\frac{2-b}{2}\big(U+\e^2\varphi_\e +\e^2\xi\big)(x/\e) \Big).
\end{align*}
Similarly, we obtain that $\|J_2\|_Y\le Ce^{CM_0\e^2}(1 +M_0^2\e^2)$
for $0<\e<{\e}'$ and $|\alpha|\le 1$.

Clearly $\|J_3\|_Y\le Ce^{CM_0\e^2}$ for $0<\e<{\e}'$ and $|\alpha|\le 1$.
Combining all these estimates, we obtain \eqref{eq:g2e-defined}.\\

We have proved that if $0<\e<\e''=\min\{{\e}',\e_1\}$ and $|\alpha|<\e''$ then
$\mathcal{L}_1^{-1}g_{1,\e,\alpha}(\xi,\eta)\in H^2(\RN)$ and
$(\mathcal{L}_{2,\alpha})^{-1} T_\alpha g_{2,\e,\alpha}(\xi,\eta)\in E_\alpha$
for all $(\xi,\eta)\in S_0$. Moreover it follows from Theorem \ref{thm:L-isomorphism},
\eqref{eq:g1e-defined} and \eqref{eq:g2e-defined} that there exist constants $C_i=C_i(p_j,q_k)$ such that
\begin{align*}
 \|\mathcal{L}_1^{-1}g_{1,\e,\alpha}(\xi,\eta)\|_{H^2(\RN)}
&\le C_3C_1M_0e^{C_1M_0\e^2}\e, \\
 \|(\mathcal{L}_{2,\alpha})^{-1} T_\alpha g_{2,\e,\alpha}(\xi,\eta)\|_X
&\le C_4C_2e^{C_2M_0\e^2}(1+M_0^2\e^2)
\end{align*}
for all $(\xi,\eta)\in S_0$. We let
\[ M_0 =1+2C_3C_1 +2C_4C_1. \]
Thus there exists a number $\hat{\e}\in (0,\e'')$ such that if $0<\e<\hat{\e}$ and
$|\alpha|<\hat{\e}$ then the map
\[ \Gamma_{\e,\alpha}(\xi,\eta) =\big(\mathcal{L}_1^{-1}g_{1,\e,\alpha}(\xi,\eta),
 ~(\mathcal{L}_{2,\alpha})^{-1}T_\alpha g_{2,\e,\alpha} (\xi,\eta) \big). \]
is a well-defined map from $S_0$ into $S_0$. \\

Now we show that $\Gamma_{\e,\alpha}:S_0\to S_0$ is contractive if $\e>0$ and $|\alpha|$ are sufficiently small.
Let $(\xi_1,\eta_1), (\xi_2,\eta_2)\in S_0$ be given.
For simplicity, we write $g_j=g_{j,\e,\alpha}$ ($j=1,2$).

We first estimate $\|g_{1}(\xi_1,\eta_1) -g_{1}(\xi_2,\eta_2)\|_{\LN}$. Note that
\begin{align*}
& g_{1}(\xi_1,\eta_1)(x) -g_{1}(\xi_2,\eta_2)(x) \\
&\quad = -\frac{1}{\e^2}\Big( f(U+\e^2\varphi_\e+\e^2\xi_1) -f(U+\e^2\varphi_\e+\e^2\xi_2)
 -\e^2 f'(U)(\xi_1-\xi_2)\Big)(x) \\
&\quad\quad +ae^{V_{2,\e,\alpha}(x)+\e^2\eta_2(\e x) -(b/2)\e^2\xi_2(x)}
 \big(e^{\e^2(\eta_1-\eta_2)(\e x) -(b/2)\e^2(\xi_1-\xi_2)(x)} -1\big) \\
&\quad\quad +a(b-2)e^{(V_{1,\e}+V_{2,\e,\alpha})(x) +\frac{2-b}{2}\e^2\xi_2(x) +\e^2\eta_2(\e x)}
 \Big( e^{\frac{2-b}{2}\e^2(\xi_1-\xi_2)(x) +\e^2(\eta_1-\eta_2)(\e x)} -1\Big) \\
&\quad\quad -2a\e^2e^{2V_{2,\e,\alpha}(x) -b\e^2\xi_2(x) +2\e^2\eta_2(\e x)}
 \big( e^{-b\e^2(\xi_1-\xi_2)(x) +2\e^2(\eta_1-\eta_2)(\e x)} -1 \big).
\end{align*}
It is easily verified that
\begin{align*}
 I_1^* &:= -\frac{1}{\e^2}\Big( f(U+\e^2\varphi_\e+\e^2\xi_1) -f(U+\e^2\varphi_\e+\e^2\xi_2)
 -\e^2 f'(U)(\xi_1-\xi_2)\Big) \\
&= e^U(1-e^{\e^2\varphi_\e +\e^2\xi_2})(\xi_1-\xi_2)
 +2e^{2U}(e^{2\e^2\varphi_\e +2\e^2\xi_2}-1)(\xi_1-\xi_2) \\
&\quad -\frac{1}{\e^2} e^{U+\e^2\varphi_\e+\e^2\xi_2}\big( e^{\e^2(\xi_1-\xi_2)}-1-\e^2(\xi_1-\xi_2)\big) \\
&\quad
+\frac{1}{\e^2} e^{2U+2\e^2\varphi_\e+2\e^2\xi_2}\big( e^{2\e^2(\xi_1-\xi_2)}-1-2\e^2(\xi_1-\xi_2)\big),
\end{align*}
and consequently
\[ |I_1^*| \le C\e^2e^{CM_0\e^2}|\xi_1-\xi_2|^2 +CM_0\e^2e^{CM_0\e^2}|\xi_1-\xi_2|. \]
Then it follows that
\[ \|I_1^*\|_{\LN} \le CM_0\e^2e^{CM_0\e^2}\|\xi_1-\xi_2\|_{H^2(\RN)}. \]

If we let
\[ I_2^*(x)= e^{V_{2,\e,\alpha}(x)+\e^2\eta_2(\e x) -(b/2)\e^2\xi_2(x)}
 \big(e^{\e^2(\eta_1-\eta_2)(\e x) -(b/2)\e^2(\xi_1-\xi_2)(x)} -1\big), \]
then
\begin{align*}
 |I_2^*(x)| &\le C\e^2e^{CM_0\e^2} \sigma^{3c_0M_0\e^2}(\e x) e^{V_{2,\e,\alpha}(x)}
 \big(|\xi_1-\xi_2|(x) +|\eta_1-\eta_2|(\e x) \big) %\\
%&\le C\e^2e^{CM_0\e^2} \sigma^{3c_0M_0\e^2}(\e x) e^{V_{2,\e,\alpha}(x)}
% \Big( \|\xi_1-\xi_2\|_{H^2(\RN)} +\|\eta_1-\eta_2\|_X \big(1+\ln\sigma(\e x)\big) \Big).
\end{align*}
Recall that $e^{V_{2,\e,\alpha}(x)}\le C\e^{2\lambda -2}$ for $|x|\le R_0$,
and $e^{V_{2,\e,\alpha}(x)}\le Ce^{W_\alpha(\e x)}$ for $|x|\ge R_0$.
Thus if $3c_0M_0\e^2\le 1$ then it follows from \eqref{eq:xi-eta-Linfty} that
\begin{align*}
 \|I_2^*\|_{\LN} \le C\e e^{CM_0\e^2} \big(\|\xi_1-\xi_2\|_{H^2(\RN)} +\|{\eta}_1-{\eta}_2\|_X \big).
\end{align*}
Repeating the above estimates to the remaining two quantities, we conclude that
\begin{equation}
  \label{eq:g1e-difference}
 \|g_{1}(\xi_1,\eta_1) -g_{1}(\xi_2,\eta_2)\|_{\LN}
 \le CM_0\e e^{CM_0\e^2} \big( \|\xi_1-\xi_2\|_{H^2(\RN)} +\|\eta_1-\eta_2\|_X \big)
\end{equation}
if $3c_0M_0\e^2\le 1$ and $|\alpha|\le 1$.

We now estimate $\|g_{2}(\xi_1,\eta_1) -g_{2}(\xi_2,\eta_2)\|_Y$.
It is easily checked that
\[ \frac{1}{4-ab}\big(g_{2}(\xi_1,\eta_1) -g_{2}(\xi_2,\eta_2)\big)(x)
 =J_1^*(x) +J_2^*(x) +J_3^*(x), \]
where
\begin{align*}
 J_1^*(x) &= -\frac{1}{2\e^2} e^{{V}_{2,\e,\alpha}(x/\e)} \big( e^{\e^2{\eta}_1(x) -(b/2)\e^2{\xi}_1(x/\e)}
 -e^{\e^2{\eta}_2(x) -(b/2)\e^2{\xi}_2(x/\e)}\big) \\
&\quad +\frac12 e^{W_\alpha(x)}({\eta}_1 -{\eta}_2)(x), \\
 J_2^*(x) &= -\frac{b}{4\e^2} e^{({V}_{2,\e,\alpha} +{U} +\e^2{\varphi}_\e)(x/\e)}
 \big( e^{\e^2{\eta}_1(x) +\frac{2-b}{2}\e^2{\xi}_1(x/\e)}
 -e^{\e^2{\eta}_2(x) +\frac{2-b}{2}\e^2{\xi}_2(x/\e)}\big) \\
&\quad +\frac{b}{4} e^{W_\alpha(x)}({\eta}_1 -{\eta}_2)(x) \\
 J_3^*(x) &= e^{2{V}_{2,\e,\alpha}(x/\e)} \big( e^{2\e^2{\eta}_1(x) -b\e^2{\xi}_1(x/\e)}
 -e^{2\e^2{\eta}_2(x) -b\e^2{\xi}_2(x/\e)}\big).
\end{align*}
It follows from the mean value theorem that if $|x|\le R_0\e$ then
\[ \big| g_{2}(\xi_1,\eta_1)(x) -g_{2}(\xi_2,\eta_2)(x)\big|
 \le Ce^{CM_0\e^2}\e^{2\lambda -2} \big(|{\xi}_1-{\xi}_2|(x/\e) +|{\eta}_1-{\eta}_2|(x) \big). \]

We estimate $|J_1^*|$, $|J_2^*|$ and $|J_3^*|$ for $|x|\ge R_0\e$. If we write
\[ e^{V_{2,\e,\alpha}(x/\e)} =e^{W_\alpha(x)} \big( e^{H_\e(x) -(b/2)({U}+\e^2{\varphi}_\e)(x/\e)} -1\big)
 +e^{W_\alpha(x)} \]
and
\begin{align*}
& e^{\e^2{\eta}_1(x) -(b/2)\e^2{\xi}_1(x/\e)} -e^{\e^2{\eta}_2(x) -(b/2)\e^2{\xi}_2(x/\e)} \\
&\quad = e^{\e^2\eta_1(x)} -e^{\e^2\eta_2(x)} +(e^{-(b/2)\e^2\xi_1(x/\e)} -1)(e^{\e^2\eta_1}-e^{\e^2\eta_2})(x) \\
&\qquad +e^{\e^2\eta_2(x)} (e^{-(b/2)\e^2\xi_1} -e^{-(b/2)\e^2\xi_2})(x/\e)
 \quad\mbox{for }~ |x|\ge R_0\e,
\end{align*}
then the mean value theorem implies that if $|x|\ge R_0\e$ then
\begin{align*}
 |J_1^*| &\le Ce^{CM_0\e^2}(\sigma^{3c_0M_0\e^2}e^{W_\alpha})(x) \big( \e^2|x|^{-2} +e^{-|x|/\e} +\e^2\big)
 |{\xi}_1-{\xi}_2|(x/\e) \\
&\quad +Ce^{CM_0\e^2}(\sigma^{3c_0M_0\e^2}e^{W_\alpha})(x) \big( \e^2|x|^{-2} +e^{-|x|/\e} +\e^2\big)
 |{\eta}_1-{\eta}_2|(x) \\
&\quad +Ce^{CM_0\e^2} (\sigma^{3c_0M_0\e^2}e^{W_\alpha})(x) \big( |{\xi}_1-{\xi}_2|(x/\e)
 + \e^2({\eta}_1-{\eta}_2)^2(x)  \big).
\end{align*}
Since $\lambda\ge 3/2$, if $\e>0$ is sufficiently small and $|\alpha|\le 1$ then
\begin{align*}
 \|J_1^*\|_Y \le CM_0\e e^{CM_0\e^2} \big( \|\xi_1-\xi_2\|_{H^2(\RN)} +\|{\eta}_1 -{\eta}_2\|_X\big).
\end{align*}
Similarly, we obtain that
$\|J_2^*\|_Y \le CM_0\e e^{CM_0\e^2} \big( \|\xi_1-\xi_2\|_{H^2(\RN)} +\|{\eta}_1 -{\eta}_2\|_X\big)$.
Finally it follows from the mean value theorem that
\[ \|J_3^*\|_Y \le C\e^2e^{CM_0\e^2} \big( \|\xi_1-\xi_2\|_{H^2(\RN)} +\|{\eta}_1-{\eta}_2\|_X\big). \]
From all these estimates, it follows that
\begin{equation}
  \label{eq:g2e-difference}
 \| g_{2}(\xi_1,\eta_1) -g_{2}(\xi_2,\eta_2)\|_Y
 \le CM_0\e e^{CM_0\e^2} \big( \|\xi_1-\xi_2\|_{H^2(\RN)} +\|\eta_1-\eta_2\|_X\big)
\end{equation}
if $\e>0$ is sufficiently small and $|\alpha|\le 1$.

Therefore we can choose a constant $\overline{\e}_2\in (0,\hat{\e})$ such that
$\Gamma_{\e,\alpha}:S_0\to S_0$ is a well-defined contraction map provided that $0<\e<\overline{\e}_2$
and $|\alpha|<\overline{\e}_2$. The contraction mapping theorem implies that,
if $0<\e<\overline{\e}_2$ and $|\alpha|<\overline{\e}_2$ then
$\Gamma_{\e,\alpha}$ has a unique fixed point in $S_0$. This proves Proposition \ref{prop:contraction}.
\end{proof}

By Proposition \ref{prop:contraction}, if $0<\e<\overline{\e}_2$ and $|\alpha|<\overline{\e}_2$
then $(\xi_{\e,\alpha},\eta_{\e,\alpha})\in S_0$ satisfies
\begin{align*}
 \mathcal{L}_1\xi_{\e,\alpha} - g_{1,\e,\alpha}(\xi_{\e,\alpha},\eta_{\e,\alpha}) &=0, \\
 T_\alpha\big( \mathcal{L}_{2,\alpha}\eta_{\e,\alpha} -g_{2,\e,\alpha}(\xi_{\e,\alpha},
 \eta_{\e,\alpha}) \big) &=0.
\end{align*}
Here we used $T_\alpha\mathcal{L}_{2,\alpha}=\mathcal{L}_{2,\alpha}$ on $E_\alpha$.
Moreover $\|\xi_{\e,\alpha}\|_{H^2(\RN)}\le C\e$ and $\|\eta_{\e,\alpha}\|_X\le C$ for some
constant $C$ independent of $\e$ and $\alpha$ as $(\e,\alpha)\to (0,{\bf 0})$.

We claim the map $(\e,\alpha)\mapsto (\xi_{\e,\alpha},\eta_{\e,\alpha})$ is continuous.
Indeed, there holds
\[ |g_{j,\e_2,\alpha_2}(\xi_{\e_2,\alpha_2},\eta_{\e_2,\alpha_2})
 -g_{j,\e_1,\alpha_1}(\xi_{\e_1,\alpha_1},\eta_{\e_1,\alpha_1}) | \le \triangle_{1,j} +\triangle_{2,j}, \]
where
\begin{align*}
 \triangle_{1,j} &= |g_{j,\e_2,\alpha_2}(\xi_{\e_2,\alpha_2},\eta_{\e_2,\alpha_2})
 -g_{j,\e_1,\alpha_1}(\xi_{\e_2,\alpha_2},\eta_{\e_2,\alpha_2})|, \\
 \triangle_{2,j} &= |g_{j,\e_1,\alpha_1}(\xi_{\e_2,\alpha_2},\eta_{\e_2,\alpha_2})
 -g_{j,\e_1,\alpha_1}(\xi_{\e_1,\alpha_1},\eta_{\e_1,\alpha_1})|.
\end{align*}
Since $(\xi_{\e_j,\alpha_j},\eta_{\e_j,\alpha_j})\in S_0$, the Lebesgue convergence theorem implies that
$\|\triangle_{1,j}\|_{\LN}=o(1)$ as $(\e_2,\alpha_2)\to (\e_1,\alpha_1)$.
It follows from the proof of \eqref{eq:g1e-difference} that
$\|\triangle_{2,j}\|_{\LN}=o(1)$ as $(\e_2,\alpha_2)\to (\e_1,\alpha_1)$.
Then Theorem \ref{thm:L-isomorphism} implies that
\[ \|\xi_{\e_2,\alpha_2}-\xi_{\e_1,\alpha_1}\|_{H^2(\RN)} +\|\eta_{\e_2,\alpha_2}-\eta_{\e_1,\alpha_1}\|_{X}
 \to 0 \quad\mbox{as }~ (\e_2,\alpha_2)\to (\e_1,\alpha_1). \]
This proves the claim. We skip the details. \\

Recall that $\lambda\ge 3/2$.
If $\lambda\notin\mathbb{N}$ then $T_\alpha:Y\to Y$ is an identity.
In this case $(\xi_{\e,\alpha},\eta_{\e,\alpha})$ is a solution of the system
\eqref{eq:eq-L1}-\eqref{eq:eq-L2}, and hence Theorem \ref{T121} is proved when $\lambda\notin\mathbb{N}$.

If $\lambda\in\mathbb{N}$ there exist constants $c_{1,\e,\alpha},c_{2,\e,\alpha}\in\mathbb{R}$ such that
\[ \mathcal{L}_{2,\alpha}\eta_{\e,\alpha} -g_{2,\e,\alpha}(\xi_{\e,\alpha},\eta_{\e,\alpha})
 = c_{1,\e,\alpha}\sigma^{-2-2d}Z_{\alpha,1} +c_{2,\e,\alpha}\sigma^{-2-2d}Z_{\alpha,2}, \]
 and \begin{equation*}
   \int_{\RN} \Big(\mathcal{L}_{2,\alpha } \eta_{\e,\alpha }
 -g_{2,\e,\alpha }(\xi_{\e,\alpha },\eta_{\e,\alpha })-\sum_{i=1}^2c_{i,\e,\alpha}\sigma^{-2-2d}Z_{\alpha,i} \Big) Z_{\alpha(\e),j} dx =0\ \ (j=1,2),
\end{equation*}
for any $\e\in (0,\overline{\e}_2)$ and $|\alpha|<\overline{\e}_2$.
To complete the proof of Theorem \ref{T121} for $\lambda\ge 3/2$, in the following proposition we will prove that if $\e>0$ is sufficiently small and
the singular points $p_j,q_k$ satisfy some conditions then there exists an $\alpha(\e)\in\mathbb{C}$
such that $c_{1,\e,\alpha(\e)}=c_{2,\e,\alpha(\e)}=0$.

\begin{proposition}
  \label{prop:asymptotic}
Suppose $\lambda\in\mathbb{N}$ and one of the following conditions holds.
\begin{itemize}
 \item[{\rm (i)}] $\lambda\ge 3$.
 \item[{\rm (ii)}] $\lambda=2$ and $p_j=q_k={\bf 0}$ for all $j,k$.
\end{itemize}
Then there exists a constant $\e_*\in (0,\overline{\e}_2)$ satisfying the following property:
for each $0<\e<\e_*$ there exists an $\alpha=\alpha(\e)\in \mathbb{C}$ such that
\begin{equation}
  \label{eq:asymptotic}
 \int_{\RN} \Big(\mathcal{L}_{2,\alpha(\e)} \eta_{\e,\alpha(\e)}
 -g_{2,\e,\alpha(\e)}(\xi_{\e,\alpha(\e)},\eta_{\e,\alpha(\e)})\Big) Z_{\alpha(\e),j} dx =0,
\end{equation}and $c_{j,\e,\alpha(\e)}\equiv0$,  $(j=1,2)$.
Moreover, $|\alpha(\e)|\le C\e$ as $\e\to 0$.

\end{proposition}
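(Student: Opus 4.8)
The plan is to solve the $2\times 2$ system $c_{1,\e,\alpha}=c_{2,\e,\alpha}=0$ for $\alpha\in\mathbb{C}\cong\mathbb{R}^2$ by a fixed-point/degree argument, after extracting the leading-order dependence on $\alpha$. First I would express the constants $c_{j,\e,\alpha}$ explicitly: pairing the identity $\mathcal{L}_{2,\alpha}\eta_{\e,\alpha}-g_{2,\e,\alpha}(\xi_{\e,\alpha},\eta_{\e,\alpha})=\sum_i c_{i,\e,\alpha}\sigma^{-2-2d}Z_{\alpha,i}$ with $Z_{\alpha,j}$ and using the fact that $Z_{\alpha,0},Z_{\alpha,1},Z_{\alpha,2}$ span $\ker\mathcal{L}_{2,\alpha}$ (Lemma \ref{lem:L2-kernel}), so that $\int \mathcal{L}_{2,\alpha}\eta_{\e,\alpha}\cdot Z_{\alpha,j}=0$, one gets
\[
 \sum_{i=1}^2 c_{i,\e,\alpha}\,a_{ij}(\alpha) = -\int_{\RN} g_{2,\e,\alpha}(\xi_{\e,\alpha},\eta_{\e,\alpha})\,Z_{\alpha,j}\,dx,\qquad j=1,2,
\]
with $a_{ij}(\alpha)=\int\sigma^{-2-2d}Z_{\alpha,i}Z_{\alpha,j}$ the invertible matrix from Lemma \ref{lem:T-aux}. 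Hence $c_{j,\e,\alpha}=0$ for $j=1,2$ is equivalent to the system of two real equations
\[
 \Phi_{\e,j}(\alpha) := \int_{\RN} g_{2,\e,\alpha}(\xi_{\e,\alpha},\eta_{\e,\alpha})(x)\,Z_{\alpha,j}(x)\,dx = 0,\qquad j=1,2.
\]

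The next step is the asymptotic analysis of $\Phi_{\e,j}(\alpha)$ as $\e\to 0$. Using the decomposition \eqref{g2e} of $g_{2,\e,\alpha}$ into $J_1+J_2+J_3$, the dominant contribution comes from the terms in $J_1,J_2$ that are \emph{not} quadratically small, namely from the explicit expansions
\[
 J_1+J_2 = -\frac{1}{2\e^2}e^{W_\alpha(x)}\Big(H_\e(x)-\tfrac{b}{2}(U+\e^2\varphi_\e+\e^2\xi)(x/\e)\Big) -\frac{b}{4\e^2}e^{W_\alpha(x)}\Big(H_\e(x)+\tfrac{2-b}{2}(U+\e^2\varphi_\e+\e^2\xi)(x/\e)\Big) + (\text{quadratic remainders}).
\]
Since $U(x/\e)=2\sum_i \ln|x-\e p_i|+O(1)$ near the vortices and decays exponentially for $|x|\gtrsim\e$, while $H_\e$ is $O(\e^2/|x|^2)$, a careful splitting of $\RN$ into $|x|\lesssim\e$ (vortex region) and $|x|\gtrsim\e$ shows that $\Phi_{\e,j}(\alpha)$ has an expansion of the form $\Phi_{\e,j}(\alpha)=\e^{2\lambda-4}\big(A_j(\alpha)+o(1)\big)$ (or a suitable power of $\e$), where the leading coefficient $A_j(\alpha)$ is, up to a positive constant, the $j$-th component of $\nabla_\alpha$ of a strictly convex function — concretely it is governed by $\int e^{W_\alpha}\,Z_{\alpha,j}$-type integrals which, by the classification \eqref{eq:W} and Pohozaev/divergence identities for the Liouville kernel, vanish exactly at $\alpha$ determined by the centroid condition \eqref{eq:PQ-location}, i.e. at $\alpha$ near $0$ (this is why the shift \eqref{eq:PQ-location} was imposed). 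Moreover the Jacobian $\partial A_j/\partial\alpha_k$ at that point is nondegenerate, again by the nondegeneracy of the Liouville kernel. In case (ii) ($\lambda=2$, all vortices at the origin) the coefficient $H_\e$ vanishes identically so the analysis simplifies and the leading term comes purely from the $U(x/\e)$ contribution; in case (i) ($\lambda\ge 3$) the power of $\e$ is large enough that all borderline logarithmic integrals converge and the error terms from $\xi_{\e,\alpha},\eta_{\e,\alpha}$ (which are $O(\e)$ and $O(1)$ in their norms, respectively) are genuinely lower order.

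Finally I would close the argument by a fixed-point scheme: rewrite $\Phi_{\e,j}(\alpha)=0$ as $\alpha = \alpha - [D_\alpha A(\,\cdot\,)]^{-1}\big(\e^{-(2\lambda-4)}\Phi_\e(\alpha)\big) =: \mathcal{N}_\e(\alpha)$, and show that for $\e$ small $\mathcal{N}_\e$ maps the disk $\{|\alpha|\le C\e\}$ into itself and is a contraction there; the continuity in $\alpha$ of $(\xi_{\e,\alpha},\eta_{\e,\alpha})$ established after Proposition \ref{prop:contraction} and the $C^1$-dependence of $W_\alpha,Z_{\alpha,j}$ on $\alpha$ make $\Phi_\e$ continuous (indeed Lipschitz) in $\alpha$. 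Brouwer's (or Banach's) fixed point theorem then yields $\alpha(\e)$ with $|\alpha(\e)|\le C\e$. The main obstacle I anticipate is the asymptotic computation of the leading coefficient $A_j(\alpha)$ and, in particular, verifying both that it has a zero near $\alpha=0$ \emph{and} that its Jacobian there is invertible: this requires delicate handling of the interplay between the slowly decaying weight $e^{W_\alpha}$, the singular factor $H_\e$, and the rescaled exponentially-decaying profile $U(x/\e)$ near the concentration point at infinity, and it is exactly here that the hypotheses $\lambda\ge 3$ (resp. $\lambda=2$ with coincident vortices) and the normalization \eqref{eq:PQ-location} are used to guarantee convergence of all relevant integrals and the correct vanishing.
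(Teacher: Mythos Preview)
Your overall strategy matches the paper's: reduce via self-adjointness of $\mathcal{L}_{2,\alpha}$ to the finite-dimensional problem $\Phi_{\e}(\alpha):=\int_{\RN} g_{2,\e,\alpha}(\xi_{\e,\alpha},\eta_{\e,\alpha})\,Z_{\alpha}\,dx=0$, expand in $(\e,\alpha)$, and close with Brouwer. However, you have misidentified the leading term, and this is a genuine gap.

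You claim the dominant contribution comes from the linear parts of $J_1,J_2$, i.e.\ from $H_\e$ and $U(\cdot/\e)$, and guess a scaling $\Phi_\e(\alpha)=\e^{2\lambda-4}\bigl(A_j(\alpha)+o(1)\bigr)$. In fact the paper shows
\[
\Phi_\e(\alpha)=\triangle(\alpha)+O(\e),\qquad
\triangle(\alpha)=\tfrac{(4-ab)(16-ab^3)}{16}\!\int_{\RN}\! e^{2W_\alpha}Z_\alpha\,dx
\;-\;\tfrac{(4-ab)(2+b)}{4}\!\int_{\RN}\! e^{W_\alpha}A\,Z_\alpha\,dx,
\]
where $A(x)$ is the $O(|x|^{-2})$ coefficient in the expansion $H_\e=\e^2A+O(\e^3/|x|^3)$. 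The term $\int e^{2W_\alpha}Z_\alpha$ arises from $J_3$ \emph{together with} the $\varphi_\e$ contribution hidden in $J_1,J_2$ (recall $\varphi_\e(x/\e)=-\tfrac{ab}{2}e^{W_\alpha(x)}\chi(x/\e)$, so $\e^{-2}e^{W_\alpha}\cdot\e^2\varphi_\e(\cdot/\e)$ produces an $e^{2W_\alpha}$ factor). The $U(\cdot/\e)$ contribution you emphasize is only $O(\e^{2\lambda-2})$ after integration, hence subleading; it does \emph{not} drive the fixed point.

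The crux is the linearization of $\triangle(\alpha)$ at $\alpha=0$. By angular symmetry (with $\lambda\ge 3$, or $A\equiv 0$ in case (ii)), both integrals defining $\triangle$ vanish at $\alpha=0$, and moreover the $\alpha$-derivative of the $e^{W_\alpha}A\,Z_\alpha$ integral also vanishes (since $A$ contains only second harmonics $\cos 2\theta,\sin 2\theta$). The entire invertible linear part therefore comes from
\[
\partial_\alpha\Big|_{\alpha=0}\int_{\RN} e^{2W_\alpha}Z_\alpha\,dx
= -\frac{(\lambda-1)\pi}{\lambda}\int_0^\infty\frac{t^{2\lambda-2}}{(1+t^\lambda)^5}\,dt\ \neq 0,
\]
computed by an explicit integration by parts. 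This gives $\triangle(\alpha)=c\,\alpha+O(|\alpha|^2)$ with $c\neq 0$, whence Brouwer on a disk of radius $C\e$ yields $\alpha(\e)$ with $|\alpha(\e)|\le C\e$. Your proposal never isolates the $e^{2W_\alpha}$ term, so as written you would not be able to verify the nondegeneracy you need; and the ``centroid condition'' \eqref{eq:PQ-location} is used only to make $H_\e=O(\e^2/|x|^2)$ (killing the $O(\e/|x|)$ term), not to locate the zero of $\triangle$.
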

\begin{proof}We remark that   the proof of Lemma \ref{lem:T-aux} yields that \eqref{eq:asymptotic} implies  $c_{j,\e,\alpha(\e)}\equiv0$,  $(j=1,2)$. So we are going to prove \eqref{eq:asymptotic}. \\
Since $\eta_{\e,\alpha}\in X$ and $\mathcal{L}_{2,\alpha}Z_{\alpha,j}=0$, it follows that
\[ \int_{\RN} Z_{\alpha,j}\mathcal{L}_{2,\alpha} \eta_{\e,\alpha} dx
 =\int_{\RN} \eta_{\e,\alpha} \mathcal{L}_{2,\alpha}Z_{\alpha,j} dx =0, \quad (j=1,2) \]
which in turn implies that
\[ \int_{\RN} \Big(\mathcal{L}_{2,\alpha} \eta_{\e,\alpha}
 -g_{2,\e,\alpha}(\xi_{\e,\alpha}, \eta_{\e,\alpha})\Big) Z_{\alpha,j} dx
 = -\int_{\RN} g_{2,\e}(\cdot\,\xi_{\e,\alpha}, \eta_{\e,\alpha}) Z_{\alpha,j} dx. \]

Let
\begin{equation}
  \label{eq:A}
 A(x)= \frac{1}{|x|^2} \Big( \frac{b}{2}\sum_{j=1}^{N_1} |p_j|^2 +\sum_{k=1}^{N_2} |q_k|^2\Big)
 -\frac{1}{|x|^4} \Big(\sum_{j=1}^{N_1} b(p_j\cdot x)^2 +\sum_{k=1}^{N_2} 2(q_k\cdot x)^2\Big).
\end{equation}
Let $x=(|x|\cos\theta,|x|\sin\theta)$. Then we see that
\begin{equation}\label{cossin}A(x)=A_1\frac{\cos 2\theta}{|x|^2}+A_2\frac{\sin 2\theta}{|x|^2}\ \ \textrm{for some constant}\ \ A_1, A_2\in \mathbb{R}. \end{equation}

We claim that there exists a constant $C=C(p_j,q_k)$ such that
\begin{equation}
  \label{eq:He-A}
 |H_\e(x) -\e^2A(x)| \le \frac{C\e^3}{|x|^3} \quad\mbox{for }~ |x|\ge R_0\e,
\end{equation}
where $R_0$ and $H_\e$ are defined in \eqref{eq:R0} and \eqref{eq:He}, respectively.
To prove \eqref{eq:He-A}, we let $\displaystyle\Phi_j= -\frac{2\e p_j\cdot x}{|x|^2} +\frac{\e^2|p_j|^2}{|x|^2}$
and $\displaystyle\Psi_k= -\frac{2\e q_k\cdot x}{|x|^2} +\frac{\e^2|q_k|^2}{|x|^2}$ for simplicity.
It follows from \eqref{eq:PQ-location} that
\begin{align}
& \sum_{j=1}^{N_1} \frac{b}{2}\Big(\Phi_j -\frac12\Phi_j^2\Big)
 +\sum_{k=1}^{N_2} \Big(\Psi_k -\frac12 \Psi_k^2\Big) \label{eq:claim:He-A-1} \\
&\quad =\e^2A(x) +\sum_{j=1}^{N_1} \Big( \frac{b\e^3|p_j|^2(p_j\cdot x)}{|x|^4} -\frac{b\e^4|p_j|^4}{4|x|^4}\Big)
 +\sum_{k=1}^{N_2} \Big(\frac{2\e^3|q_k|^2(q_k\cdot x)}{|x|^4} -\frac{\e^4|q_k|^4}{2|x|^4}\Big). \nonumber
\end{align}
We also note that
\[ |\Phi_j| \le\frac{2\e|p_j|}{|x|} +\frac{\e^2|p_j|^2}{|x|^2} \le\frac{2|p_j|}{R_0} +\frac{|p_j|^2}{R_0^2}
 \le\frac12 \quad\mbox{for }~ |x|\ge R_0\e. \]
Similarly, $|\Psi_k|\le 1/2$ for $|x|\ge R_0\e$.
Since $|\ln(1+t)-t+(t^2/2)|\le 3|t|^3$ for $|t|\le 1/2$, it follows that
\[ \Big|H_\e(x) -\sum_{j=1}^{N_1} \frac{b}{2}\Big(\Phi_j -\frac12\Phi_j^2\Big)
 -\sum_{k=1}^{N_2} \Big(\Psi_k -\frac12 \Psi_k^2\Big)\Big| \le \frac{C\e^3}{|x|^3}
 \quad\mbox{for }~ |x|\ge R_0\e. \]
Then \eqref{eq:claim:He-A-1} proves the claim \eqref{eq:He-A}.

For convenience, we write
\[ Z_\alpha(z) =\frac{z^\lambda+\alpha}{1+|z^\lambda+\alpha|^2}, ~\quad z=x_1+{\rm i}x_2, \]
so that $Z_\alpha=Z_{\alpha,1}+{\rm i}Z_{\alpha,2}$.
We now consider two cases separately. \\

\noindent {\it Case (i)}. Suppose that $\lambda\ge 3$.

We claim that if $|\alpha|<\overline{\e}_2$ then
\begin{equation}
  \label{eq:g2e-Z-delta}
 \int_{\RN} g_{2,\e,\alpha} (\xi_{\e,\alpha},\eta_{\e,\alpha}) Z_\alpha dx
 =\triangle(\alpha)+O(\e) \quad\mbox{uniformly as }~ \e\to 0,
\end{equation}
where we set
\begin{align*}
 \triangle(\alpha) &= -\frac14 (2+b)(4-ab) \int_{\RN} e^{W_\alpha}AZ_\alpha dx
 +\frac{1}{16} (16-ab^3)(4-ab) \int_{\RN} e^{2W_\alpha}Z_\alpha dx \\
&= -8\lambda^2 \int_{\RN} \frac{|z|^{2\lambda-2}(z^\lambda+\alpha)A(z)}{ (1+|z^\lambda+\alpha|^2)^3} dx
 +\frac{64\lambda^4(16-ab^3)}{(4-ab)(2+b)^2}
 \int_{\RN} \frac{|z|^{4\lambda-4}(z^\lambda+\alpha)}{(1+|z^\lambda+\alpha|^2)^5} dx.
\end{align*}
%
%\[ \gamma_*(\e)=\left\{ \begin{array}{ll} \e, & \lambda\ge 2, \\
% \e(1+|\ln\e|), & \lambda=3/2. \end{array} \right. \]
%\[ \rho_\e= -\frac{2+b}{4\e^2} \int_{|x|\ge R_0\e} e^{W_\alpha}(H_\e-\e^2A)
% \frac{1}{1+|z^\lambda+\alpha|^2} dx. \]
%
Indeed, we first note that $|g_{2,\e,\alpha}(\xi_{\e,\alpha},\eta_{\e,\alpha})(x)|\le C\e^{2\lambda-4}$
for $|x|\le R_0\e$, and hence
\[ \int_{|x|\le R_0\e} \big| g_{2,\e,\alpha}(\xi_{\e,\alpha},\eta_{\e,\alpha}) Z_{\alpha}\big|dx
 \le C\e^{2\lambda-2} \le C\e. ~\quad (j=1,2) \]

If $|x|\ge R_0\e$,  by \eqref{g2e} and \eqref{eq:varphi-e},  it is easily verified that
\begin{equation}
  \label{eq:g2e-decomp}
 \frac{1}{4-ab} g_{2,\e,\alpha} (\xi_{\e,\alpha},\eta_{\e,\alpha})
 = -\frac{2+b}{4} e^{W_\alpha}A +\frac{16-ab^3}{16} e^{2W_\alpha} +M_\e,
\end{equation}
where
\begin{align*}
M_\e &= -\frac{1}{2\e^2} e^{W_\alpha}(e^{R_{1,\e}}-1-R_{1,\e})
 -\frac{b}{4\e^2}e^{W_\alpha}(e^{R_{2,\e}}-1-R_{2,\e}) \\
&\quad +e^{2W_\alpha}(e^{2R_{1,\e}}-1) +\frac{b^2}{8\e^2}e^{W_\alpha} \big({U}(\cdot/\e)
+\e^2{\xi}_{\e,\alpha}(\cdot/\e)\big) -\frac{2+b}{4\e^2} e^{W_\alpha}(H_\e -\e^2A).
\end{align*}
Here the functions $R_{1,\e}$ and $R_{2,\e}$ are given in \eqref{eq:R1e}-\eqref{eq:R2e}
with $\xi=\xi_{\e,\alpha}$ and $\eta=\eta_{\e,\alpha}$.

Since $\lambda\ge 3$, it follows that
\begin{align*}
& \Big|\int_{|x|\ge R_0\e} \frac{1}{\e^2} e^{W_\alpha}(e^{R_{j,\e}}-1-R_{j,\e})Z_\alpha dx \Big|
\le \int_{|x|\ge R_0\e} \frac{C}{\e^2} |x|^{2\lambda-2}\sigma^{-4\lambda} |R_{j,\e}|^2 dx \\
&~ \le \int_{|x|\ge R_0\e} \frac{C}{\e^2} |x|^{2\lambda-2}\sigma^{-4\lambda}
 \Big(\frac{\e^4}{|x|^4} +e^{-2|x|/\e} +\e^4 +\e^4(\ln\sigma)^2 \Big) dx \le C\e
\end{align*}
and
\[ \Big|\int_{|x|\ge R_0\e} e^{2W_\alpha}(e^{2R_{1,\e}}-1)Z_\alpha dx\Big|
 \le C\e^{4\lambda-2}+C\e^2 \le C\e^2. \]
Since $\|\xi_{\e,\alpha}\|_{\LN}\le C\e$, we obtain that
\begin{align*}
& \Big| \frac{1}{\e^2} \int_{|x|\ge R_0\e} e^{W_\alpha(x)} ({U}+\e^2\xi_{\e,\alpha})(x/\e)
 Z_\alpha(x)dx \Big| \\
&\quad \le \frac{C}{\e^2} \int_{|x|\ge R_0\e} |x|^{2\lambda-2}e^{-|x|/\e} dx
 +C\|\xi_{\e,\alpha}(\cdot/\e)\|_{L^2(|x|\ge R_0\e)} \\
&\quad \le C\e^{2\lambda-2} +C\e\|\xi_{\e,\alpha}\|_{\LN} ~\le~ C\e^{2\lambda-2} +C\e^2 \le C\e.
\end{align*}
Finally it follows from \eqref{eq:He-A} that
\begin{align*}
 \Big| \int_{|x|\ge R_0\e} \frac{1}{\e^2} e^{W_\alpha}(H_\e-\e^2A)Z_\alpha dx\Big|
 \le C\e\int_{|x|\ge R_0\e} |x|^{2\lambda-5}\sigma^{-5\lambda} dx \le C\e.
\end{align*}
Then our claim \eqref{eq:g2e-Z-delta} follows from \eqref{eq:g2e-decomp} and the above error estimates.

We claim that
\begin{equation}
  \label{eq:Delta-alpha}
 \triangle(\alpha) = \frac{64\lambda^4(ab^3-16)(\lambda-1)\pi\alpha}{(4-ab)(2+b)^2\lambda}
 \int_0^\infty \frac{t^{2\lambda-2}}{(1+t^\lambda)^5} dt +O(|\alpha|^2)
 \quad\mbox{as }~ |\alpha|\to 0.
\end{equation}
Indeed, we note that as $|\alpha|\to 0$,
\begin{align*}
 \frac{z^\lambda+\alpha}{(1+|z^\lambda+\alpha|^2)^3} &= \frac{z^\lambda}{(1+|z|^{2\lambda})^3}
 +\frac{\alpha(1-2|z|^{2\lambda})-3\overline{\alpha}z^{2\lambda}}{(1+|z|^{2\lambda})^4}
 +O\big(|\alpha|^2\sigma^{-7\lambda}\big), \\
 \frac{z^\lambda+\alpha}{(1+|z^\lambda+\alpha|^2)^5} &= \frac{z^\lambda}{(1+|z|^{2\lambda})^5}
 +\frac{\alpha(1-4|z|^{2\lambda})-5\overline{\alpha}z^{2\lambda}}{(1+|z|^{2\lambda})^6}
 +O\big(|\alpha|^2\sigma^{-11\lambda}\big),
\end{align*}
where $\overline{\alpha}$ denotes the complex conjugate of $\alpha$.

If we introduce the polar coordinates $x=(r\cos\theta, r\sin\theta)$ then we obtain from \eqref{cossin} and $\lambda\ge3$ that
\begin{align*}
 \int_{\RN} \frac{|z|^{2\lambda-2}z^\lambda A(z)}{(1+|z|^{2\lambda})^3}dz=0,\ \ \int_{\RN} \frac{|z|^{4\lambda-4}z^\lambda }{(1+|z|^{2\lambda})^5}dz=0, \ \ \textrm{and thus}
\end{align*}
\begin{align*}
 \triangle(\alpha) &= -8\lambda^2\alpha
 \int_{\RN} \frac{|x|^{2\lambda-2}(1-2|x|^{2\lambda})A(x)}{ (1+|x|^{2\lambda})^4} dx \\
&\quad +\frac{64\lambda^4(16-ab^3)\alpha}{(4-ab)(2+b)^2}
 \int_{\RN} \frac{|x|^{4\lambda-4}(1-4|x|^{2\lambda})}{(1+|x|^{2\lambda})^6} dx +O(|\alpha|^2)
 \quad\mbox{as }~ |\alpha|\to 0.
\end{align*}
Here we used $\lambda\ge 3$. We also obtain that
\[ \int_{\RN} \frac{|x|^{2\lambda-2}(1-2|x|^{2\lambda}) A(x)}{ (1+|x|^{2\lambda})^4} dx =0. \]
%
%To prove this, we write $p_j=(p_{j1},p_{j2})$ and $q_k=(q_{k1},q_{k2})$. Then
%\begin{align*}
%& \int_{\RN} \frac{|x|^{2\lambda-2}(1-2|x|^{2\lambda}) A(x)}{ (1+|x|^{2\lambda})^4} dx \\
%&\quad = 2\pi\int_0^\infty \frac{r^{2\lambda-3}(1-2r^{2\lambda})}{(1+r^{2\lambda})^4}
% \Big( \sum_{j=1}^{N_1} \frac{b}{2} |p_j|^2 +\sum_{k=1}^{N_2} |q_k|^2 \Big) dr \\
%&\qquad -\int_0^{2\pi}\int_0^\infty \frac{r^{2\lambda-3}(1-2r^{2\lambda})}{(1+r^{2\lambda})^4}
% \Big( \sum_{j=1}^{N_1} b(p_{j1}\cos\theta +p_{j2}\sin\theta)^2 \\
%&\qquad +\sum_{k=1}^{N_2} 2(q_{k1}\cos\theta +q_{k2}\sin\theta)^2 \Big) drd\theta  =0.
%\end{align*}
%
Moreover, integration by parts (\cite{CI}) yields
\begin{align*}
& \int_{\RN} \frac{|x|^{4\lambda-4}(1-4|x|^{2\lambda})}{(1+|x|^{2\lambda})^6} dx \\
&\quad = 2\pi\int_0^\infty \Big(\frac{5r^{4\lambda-4}}{(1+r^{2\lambda})^6}
 -\frac{4r^{4\lambda-4}}{(1+r^{2\lambda})^5}\Big) rdr
 = \pi\int_0^\infty \Big(\frac{5t^{2\lambda-2}}{(1+t^{\lambda})^6}
 -\frac{4t^{2\lambda-2}}{(1+t^{\lambda})^5}\Big) dt \\
&\quad = \frac{\pi}{\lambda} \Big[-\frac{t^{\lambda-1}}{(1+t^\lambda)^5} +\frac{t^{\lambda-1}}{(1+t^\lambda)^4}
 \Big]_0^\infty +\frac{(\lambda-1)\pi}{\lambda} \int_0^\infty \Big( \frac{t^{\lambda-2}}{(1+t^\lambda)^5}
 -\frac{t^{\lambda-2}}{(1+t^\lambda)^4} \Big) dt \\
&\quad =-\frac{(\lambda-1)\pi}{\lambda} \int_0^\infty \frac{t^{2\lambda-2}}{(1+t^\lambda)^5}dt.
\end{align*}
This proves the claim \eqref{eq:Delta-alpha}. We have proved that, as $\e\to 0$ and $|\alpha|\to 0$,
\begin{align*}
& \int_{\RN} g_{2,\e,\alpha}(\xi_{\e,\alpha},\eta_{\e,\alpha}) Z_\alpha dx \\
&\quad =\frac{64\lambda^4(ab^3-16)(\lambda-1)\pi\alpha}{(4-ab)(2+b)^2\lambda}
 \int_0^\infty \frac{t^{2\lambda-2}}{(1+t^\lambda)^5} dt +O(|\alpha|^2) +O(\e).
\end{align*}

Since $\lambda>1$ and the map $(\e,\alpha)\mapsto \int_{\RN} g_{2,\e,\alpha}(\xi_{\e,\alpha},
 \eta_{\e,\alpha})Z_\alpha dx$ is continuous,
it follows from the Brouwer fixed point theorem that
there exists a constant $\e_*\in (0,\overline{\e}_2)$ satisfying the following property:
for each $0<\e<\e_*$, there exists an $\alpha(\e)\in\mathbb{C}$ such that
\[ \int_{\RN} g_{2,\e,\alpha(\e)}(\xi_{\e,\alpha(\e)},\eta_{\e,\alpha(\e)}) Z_{\alpha(\e),j} dx =0.
 \quad (j=1,2) \]
It is obvious that $|\alpha(\e)|\le C\e$ as $\e\to 0$. \\

\noindent{\it Case (ii)}. Suppose that $\lambda=2$ and $p_j=q_k={\bf 0}$ for all $j,k$.

If $p_j=q_k={\bf 0}$ for all $j,k$ then $H_\e =A=0$ identically.
In this case, it is easily checked that all the estimates in Case (i) are still valid.
This proves Proposition \ref{prop:asymptotic}.
\end{proof}

We now deal with the remaining case of this paper.

\subsection{The case $\lambda=1$}

In this case $N_1=N_2=0$.
We look for a radially symmetric solution $(u_1,u_2)$ of the form
\begin{align*}
 u_1(r) &= -\ln 2 +\e\xi_\e(r), \\
 u_2(r) &= W_0(\e r) +2\ln\e -\frac{b}{2}\e\xi_\e(r) +\e\eta_\e(\e r). ~\quad (r=|x|)
\end{align*}
In this case, $W_0=W_0^*$ and $e^{W_0}\le C\sigma^{-4}$.
We denote by $H^2_r(\RN)$ the set of radially symmetric functions in $H^2(\RN)$.
$L^2_r(\RN)$, $X_r$ and $Y_r$ are similarly defined.

Then the system \eqref{eq:main-u1}-\eqref{eq:main-u2} can be rewritten as
\[ \mathcal{L}_1\xi_\e =h_{1,\e}(\xi_\e,\eta_\e) \quad\mbox{and}\quad
 \mathcal{L}_2\eta_\e =h_{2,\e}(\xi_\e,\eta_\e), \]
where $\mathcal{L}_1:H_r^2(\RN)\to L_r^2(\RN)$ and $\mathcal{L}_2:X_r\to Y_r$ are defined by
\[ \mathcal{L}_1 =\Delta -1, \qquad \mathcal{L}_2=\Delta +\frac14 (4-ab)(2+b)e^{W_0}, \]
and $h_{1,\e}$ and $h_{2,\e}$ are defined by
\begin{align*}
 h_{1,\e}(\xi,\eta)(r) &= \frac{1}{\e}\big( e^{2\e\xi}-e^{\e\xi}-\e\xi\big)(r)
 +a\e e^{W_0(\e r) -\frac{b}{2}\e\xi(r)+\e\eta(\e r)} \\
&\quad +\frac12 a(b-2) \e e^{W_0(\e r)+\frac{2-b}{2}\e\xi(r) +\e\eta(\e r)}
 -2a\e^3 e^{2W_0(\e r) -b\e\xi(r) +2\e\eta(\e r)}, \\
 h_{2,\e}(\xi,\eta)(r) &= \frac{1}{2\e}(ab-4) e^{W_0(r)}
 \big( e^{-\frac{b}{2}\e\xi(r/\e)+\e\eta(r)} -1-\e\eta(r)\big) \\
&\quad +\frac{b}{4\e}(ab-4) e^{W_0(r)} \big(e^{\frac{2-b}{2}\e\xi(r/\e)+\e\eta(r)} -1-\e\eta(r)\big) \\
&\quad -(ab-4)\e e^{2W_0(r) -b\e\xi(r/\e)+2\e\eta(r)}.
\end{align*}
It is well known that $\mathcal{L}_1$ is a continuous bijection from $H_r^2(\RN)$ onto $L_r^2(\RN)$,
and its inverse is also continuous. Moreover $\mbox{ker}\,\mathcal{L}_2=\mbox{span}\{Z_{0,0}\}$, and
the range of $\mathcal{L}_2$ is $Y_r$.
If we let $E_0^r=\{\xi=\xi(r)\mid (\xi,e^{W_0}Z_{0,0})_{L^2(\RN)}=0\},$ then $\mathcal{L}_2$ is an isomorphism from $E_0^r$
onto $Y_r$.

Let
\[ S_1=\{(\xi,\eta)\in H_r^2(\RN)\times E_0^r\mid \|\xi\|_{H^2(\RN)} +\|\eta\|_X\le M_1\}, \]
where $M_1\ge 1$ is a constant to be defined later.

If $(\xi,\eta)\in S_1$ then
\begin{align*}
 |h_{1,\e}(\xi,\eta)(r)| &\le C\e e^{2\e|\xi|} |\xi|^2
 +C\e e^{CM_1\e} (\sigma^{c_0M_1\e}e^{W_0})(\e r) \\
&\quad +C\e^3e^{CM_1\e} (\sigma^{c_0M_1\e} e^{2W_0})(\e r),\ \ \textrm{and}
\end{align*}
\begin{align*}
 |h_{2,\e}(\xi,\eta)(r)|
&\le Ce^{CM_1\e} (1+M_1\e)(\sigma^{c_0M_1\e}e^{W_0})(r) \big|\xi(r/\e)\big| \\
&\quad +C\e e^{CM_1\e} (\sigma^{2c_0M_1\e}e^{2W_0})(r) \\
&\quad +C\e M_1^2e^{CM_1\e} (\sigma^{c_0M_1\e} e^{W_0})(r) \big(1+(\ln\sigma)^2(r) \big)
\end{align*}
for some constants $C,c_0\ge 1$ independent of $\e$ and $M_1$.
Then we choose a number $\e'=\e'(M_1)>0$ such that $c_0M_1\e'\le 1/3$. Consequently if $0<\e<\e'$ then
\begin{align*}
 \|h_{1,\e}(\xi,\eta)\|_{\LN} &\le C_1e^{C_1M_1\e}(1+M_1^2\e), \\
 \|h_{2,\e}(\xi,\eta)\|_Y &\le C_1e^{C_1M_1\e}(\e +M_1^2\e)
\end{align*}
for some constant $C_1>0$ independent of $\e$ and $M_1$.

Moreover if $(\xi_1,\eta_1), (\xi_2,\eta_2)\in S_1$ and $\e\in (0,\e')$ is sufficiently small then
\begin{align*}
& \|h_{1,\e}(\xi_1,\eta_1)-h_{1,\e}(\xi_2,\eta_2)\|_{\LN}
 +\|h_{2,\e}(\xi_1,\eta_1)-h_{2,\e}(\xi_2,\eta_2)\|_{Y} \\
&\quad \le CM_1e^{CM_1\e}\e \big(\|\xi_1-\xi_2\|_{H^2(\RN)} +\|\eta_1-\eta_2\|_X\big).
\end{align*}

We define a map $\mathbb{L}_0:H_r^2(\RN)\times E_0^r\to L_r^2(\RN)\times Y_r$ by
\[ \mathbb{L}_0(\xi,\eta) =\big(\mathcal{L}_1\xi, ~\mathcal{L}_2\eta\big). \]
Then we can choose constants $M_1\ge 1$ and $\e^*>0$ such that if $0<\e<\e^*$ then the map
$\Gamma_\e:S_1\to S_1$ defined by
\[ \Gamma_\e (\xi,\eta) =\big(\mathcal{L}_1^{-1}h_{1,\e}(\xi,\eta), ~
 \mathcal{L}_2^{-1}h_{2,\e}(\xi,\eta)\big). \]
is a well-defined contraction map. Hence for each $0<\e<\e^*$, there exists
a unique element $(\xi_\e^*,\eta_\e^*)\in S_1$ such that
\[ \mathcal{L}_1\xi_\e^* =h_{1,\e}(\xi_\e^*,\eta_\e^*) \quad\mbox{and}\quad
 \mathcal{L}_2\eta_\e^* =h_{2,\e}(\xi_\e^*,\eta_\e^*). \]
Therefore $(u_1,u_2)$ defined by
\[ \left\{ \begin{aligned} u_1(r) &= -\ln 2 +\e\xi_\e^*(r), \\
 u_2(r) &= W_0(\e r) +2\ln\e -(b/2)\e\xi_\e^*(r) +\e\eta_\e^*(\e r)
 \end{aligned} \right. \]
is a radially symmetric solution of the system \eqref{eq:main}. \\
This completes the proof of Theorem \ref{T121}. ~\hfill $\square$

\noindent{\bf Remark}.
The above argument does not work for the case $b=1$, $N_1=2$, $N_2=0$ and $p_2=-p_1 \neq {\bf 0}$,
which seems to be a subtle case and requires a new approach.

%%%%%%%%%%%%%%%%%%%%%%%%%%%%%%%

\setcounter{equation}{0}
%\section{Appendix}

%%%%%%%%%%%%%%%%%%%%%%%%%%%%%%%

\end{document}